\newtheorem{theorem}{Theorem}[section]
\newtheorem{lemma}[theorem]{Lemma}
\newtheorem{definition}[theorem]{Definition}
\newtheorem{proposition}[theorem]{Proposition}
\DeclareMathOperator{\Sec}{Sec}
\DeclareMathOperator{\spann}{span}
\DeclareMathOperator{\Symm}{Sym}
\DeclareMathOperator{\imm}{Im}
\theoremstyle{definition}
\theoremstyle{remark}
\newtheorem{remark}[theorem]{Remark}
\numberwithin{equation}{section}
\begin{document}

\title{Geometry of elliptic normal curves of degree 6}

\author{Anatoli Shatsila}
\address{Institute of Mathematics, Jagiellonian University in Kraków, Poland}
\curraddr{}
\email{anatoli.shatsila@student.uj.edu.pl}
\thanks{}

\subjclass[2022]{14H52}

\date{}

\dedicatory{}

\keywords{}

\begin{abstract}

In our work we focus on the geometry of elliptic normal curves of degree 6 embedded in $\mathbb{P}^5$.  
We determine the space of quadric hypersurfaces through an elliptic normal curve of degree 6 and find the explicit equations of generators of $I(\Sec(C_6))$.
We study the images $C_p$ and $C_{pq}$ of a sextic $C_6$ under the projection from a general point $P \in \mathbb{P}^5$ and a general line $\overline{PQ} \subset \mathbb{P}^5$. In particular, we show that $C_p$ is $k$-normal for all $k \geq 2$ and $I(C_p)$ is generated by three homogeneous polynomials of degree 2 and two homogeneous polynomials of degree 3. We then show that $C_{pq}$ is $k$-normal for all $k \geq 3$ and $I(C_{pq})$ is generated by two homogeneous polynomials of degree 3 and three homogeneous polynomials of degree 4.   

\end{abstract}

\maketitle
.

\section{Introduction}

Elliptic normal curves, which are elliptic curves embedded in $\mathbb{P}^{n - 1}$ as degree $n$ curves and not contained in any hyperplane, have been studied since the late 19-th century (\cite{Bian, Klein, Hur}). The cases $n = 3$ and $n = 4$ are classical and well-known.
The case $n = 5$ has been considered in \cite{Hul}. In the monograph K. Hulek studies the interrelation between the Horrocks-Mumford vector bundle and the normal bundle of elliptic curves of degree 5. Many beautiful geometric constructions and configurations arise in this monograph. Inspired by the work of Hulek, we explore the geometry of several geometric objects related to normal elliptic curves of degree $6$ embedded in $\mathbb{P}^5$. 

In section 2 we describe an embedding of elliptic curve as a normal curve of degree $n$ in $\mathbb{P}^{n - 1}$ by means of Weierstrass sigma-function. Under such embedding, translation by $n$-torsion points and $-1$ involution of the curve take a particularly simple form, which allows one to use the symmetries of the curve to study its geometry. 

In section 3 we study the action of the Heisenberg group $H_6$ on the space $H^0(\mathcal{O}_{\mathbb{P}^5}(2))$ of quadric hypersurfaces containing the curve $C_6$ of degree 6. Careful analysis of the decomposition of $H^0(\mathcal{O}_{\mathbb{P}^5}(2))$ allows us to find the basis of this space explicitly. We use this result to find the generators of $I(\Sec{C_6})$. In \cite{KK} Kaneko and Kuwata consider an embedding defined by theta functions and find the equations of quadrics using the Jacobi's identity. Although their method allows to compute the equations for any integer $n$, in case $n = 6$ our approach avoids the use of algebraic identities involving embedding functions, hence it is more explicit.

In the last section we study the images $C_p$ and $C_{pq}$ of $C_6$ under the projection from a general point and a general line. We prove the following theorem, which we consider as the main result of the paper: 

\begin{theorem}

The ideal $I(C_p)$ of the curve $C_p$ is generated by three polynomials of degree $2$ and two polynomials of degree 3.

The ideal $I(C_{pq})$ of the curve $C_{pq}$ is generated by two polynomials of degree $3$ and three polynomials of degree 4.

\end{theorem}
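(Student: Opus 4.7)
After choosing coordinates on $\mathbb{P}^5$ with $P = [1:0:0:0:0:0]$ and $Q = [0:1:0:0:0:0]$, and using that a general $P$ (respectively a general line $\overline{PQ}$) is disjoint from $\Sec(C_6)$, the projections restrict to isomorphisms, so $C_p \subset \mathbb{P}^4$ and $C_{pq} \subset \mathbb{P}^3$ are smooth elliptic sextics and $h^0(\mathcal{O}_C(k)) = 6k$ by Riemann--Roch. Via pullback, $H^0(I_{C_p}(k))$ is identified with the subspace of $H^0(I_{C_6}(k))$ of forms not involving $x_0$, and $H^0(I_{C_{pq}}(k))$ with forms not involving $x_0$ or $x_1$. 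My strategy is (i) to establish $k$-normality and compute the dimensions of the graded pieces of both ideals; (ii) to identify the new generators in each degree by analysing the multiplication maps $R_1 \otimes (I_C)_k \to (I_C)_{k+1}$; and (iii) to bound the Castelnuovo--Mumford regularity so as to exclude generators in higher degrees.

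For (i), the base cases $h^0(I_{C_p}(2)) = 3$ and $h^0(I_{C_{pq}}(3)) = 2$ are verified directly from the explicit basis of $H^0(I_{C_6}(k))$ obtained in Section 3: the vanishing of the coefficient of $x_0$ (respectively of $x_0$ and $x_1$) imposes the expected number of linear conditions, and for generic $P$ and $Q$ the rank is maximal. Higher $k$ follow by induction using the hyperplane-section sequence
\[
0 \to H^0(I_C(k-1)) \to H^0(I_C(k)) \to H^0(I_{C \cap H}(k)) \to 0,
\]
which is exact once $(k-1)$-normality is known; here $C \cap H$ is a set of $6$ points in general linear position whose Hilbert function is classical. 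This gives
\[
\dim(I_{C_p})_k = \binom{k+4}{4} - 6k \ \ (k \geq 2), \qquad \dim(I_{C_{pq}})_k = \binom{k+3}{3} - 6k \ \ (k \geq 3).
\]

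For (ii), the three quadrics cutting out $I(C_p)$ have a $1$-dimensional common zero locus, hence form a regular sequence in codimension $3$, and the Koszul complex forces their first syzygies into degree $4$. Consequently $\dim(R_1 \cdot (I_{C_p})_2) = 3 \cdot 5 = 15$, and since $\dim(I_{C_p})_3 = 17$, exactly two cubic generators are new. For $C_{pq}$ the two cubics share no common factor (as $C_{pq}$ is non-degenerate in $\mathbb{P}^3$ and contained in no quadric), so they admit no linear syzygies; thus $\dim(R_1 \cdot (I_{C_{pq}})_3) = 2 \cdot 4 = 8$, and since $\dim(I_{C_{pq}})_4 = 11$, three new quartic generators are required. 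For (iii), $\mathrm{reg}(I_{C_p}) \leq 3$ follows from $H^1(I_{C_p}(2)) = 0$ (i.e.\ $2$-normality) and $H^2(I_{C_p}(1)) = H^1(\mathcal{O}_{C_p}(1)) = 0$ (since $\deg \mathcal{O}_{C_p}(1) = 6 > 0$ on an elliptic curve); analogously $\mathrm{reg}(I_{C_{pq}}) \leq 4$, so no higher-degree generators occur.

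The main obstacle will be the base-case verification of $2$-normality for $C_p$ and $3$-normality for $C_{pq}$. Both rest on an explicit analysis of the Heisenberg-equivariant description of $H^0(I_{C_6}(k))$ from Section 3, together with a careful genericity argument for $P$ (respectively for the line $\overline{PQ}$); once these base cases are settled, the inductive step and the regularity bound are routine.
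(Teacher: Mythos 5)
The $C_{pq}$ half of your proposal and the regularity/syzygy bookkeeping essentially reproduce the paper's argument, but the $C_p$ half has a genuine gap at its central point. You assert that ``the three quadrics cutting out $I(C_p)$ have a $1$-dimensional common zero locus, hence form a regular sequence,'' and everything downstream (Koszul syzygies in degree $4$, $\dim U = 3\cdot 5 = 15$ for the span $U$ of the products $x_iq_j$, hence exactly two new cubic generators) depends on it. This is precisely the hardest statement in the paper (Lemma \ref{lemmaa} and Proposition \ref{p1}), not an observation: a priori $Q_1\cap Q_2\cap Q_3$ could contain a surface. The paper excludes this by first noting that every quadric through $C_p$ contains the two lines coming from the two $3$-secant planes $S_1,S_2$ of $C_6$ through $P$ (which already uses $\sec_3(C_6)>1$ and the genericity of $P$ in the sense of Definition \ref{defka}), then pulling back to cones in $\mathbb{P}^5$ and ruling out a $2$-dimensional component in two separate cases: a third plane $S_3$ in $H\cap S$ is excluded via the construction of twelve auxiliary hyperplanes through the $R_i$, $T_i$ and four further points $A_1,\dots,A_4$ with $P\in\spann(A_1,\dots,A_4)$ (whose existence is itself Lemma \ref{lemmaa}, requiring that $P$ avoid a $4$-dimensional constructible set and not be a vertex of a rank-$3$ quadric through $C_6$), and the non-transversal case along $S_1$ is excluded by an explicit tangent-space computation. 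None of this, nor any substitute for it, appears in your proposal, so the claim that $I(C_p)$ needs exactly two cubic generators is unsupported. You identify the ``main obstacle'' as the base-case normality, but the complete-intersection statement is the real one.

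Two secondary points. First, your base cases $h^0(\mathcal{I}_{C_p}(2))=3$ and $h^0(\mathcal{I}_{C_{pq}}(3))=2$ rest on an unproved ``maximal rank for generic $P,Q$'' assertion; the paper instead proves $k$-normality for \emph{every} admissible centre of projection by a short dimension count showing that $H^0(\mathcal{O}_{\mathbb{P}^4}(k))\to H^0(\mathcal{O}_{C_6}(k))$ is surjective for $k\geq 2$ (Proposition \ref{prop3}), which both settles your base case and makes your hyperplane-section induction unnecessary; if you keep your route, the maximal-rank step must actually be argued. Second, in the $C_{pq}$ part the parenthetical ``contained in no quadric'' also needs justification; the fix is exactly the paper's: if $C_{pq}\subset V(f)$ with $\deg f = 2$, then $x_0f,\dots,x_3f$ are four independent elements of the two-dimensional space $H^0(\mathcal{I}_{C_{pq}}(3))$, a contradiction. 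With that inserted, your deduction that $g_1,g_2$ have no common factor, hence no linear syzygy, and the count $11-2\cdot 4=3$ of new quartics, agrees with the paper, so the $C_{pq}$ statement is essentially complete modulo the dimension counts, while the $C_p$ statement is not.
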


We prove this result in a few steps. Firstly, we use the $k$-normality property and Castelnuovo–Mumford regularity theory to find the dimensions of $h^0(\mathcal{I}_{C_p}(2))$ and $h^0(\mathcal{I}_{C_p}(3))$ and show that $I(C_p)$ is generated in degree 3. Then we consider a certain elementary geometric construction to show that quadric hypersurfaces, whose equations form a basis of $H^0(\mathcal{I}_{C_p}(2))$, intersect in a curve. This implies that there are no linear syzygies between them and allows us to finish the proof by counting the total contribution of the basis of $H^0(\mathcal{I}_{C_p}(2))$ in $H^0(\mathcal{I}_{C_p}(3))$. The proof of the result for $C_{pq}$ is analogous and even easier due to the fact that $h^0(\mathcal{I}_{C_{pq}}(3)) = 2$.

\section{Preliminaries}

We work over $\mathbb{C}$. By the Riemann-Roch Theorem, we can embed any elliptic curve $C$ as an elliptic normal curve of degree $n$ in $\mathbb{P}^{n - 1}$ via the complete linear system $|\mathcal{O}_C(n\cdot O)|$, where $O$ is the origin in $C$. 
Suppose that the embedding is given by $$\varphi: C = \mathbb{C} / (\mathbb{Z}\omega_1 + \mathbb{Z}\omega_2) \ni z \mapsto \varphi(z) = (X_0(z): X_1(z) : \ldots : X_{n - 1}(z)) \in \mathbb{P}^{n - 1}.$$ By $C_n$ we denote the image $\varphi(C)$. Recall the definition of the discrete Heisenberg group.
\begin{definition} 

Let $V = \mathbb{C}^n$ be a vector space with the standard basis $\{e_m\}_{m \in \mathbb{Z}/n\mathbb{Z}}$. Define $\sigma, \tau \in GL(V)$ by $$\sigma(e_m) :=e_{m-1}, \:\:\:\:\:\:\:\: \tau(e_m) := \varepsilon^m e_m,$$ where $\varepsilon = e^{\frac{2\pi i}{n}}$. 

The subgroup $H_n \subset GL(V)$ generated by $\sigma$ and $\tau$ is called the discrete Heisenberg group of dimension $n$. 

\end{definition}

We will consider embeddings under which $C_n$ admits an action of the Heisenberg group and is invariant under this action so that for any $z \in C$ both $\sigma(\varphi(z)) = (X_{n - 1}(z):X_0(z):\ldots:X_{n - 3}(z):X_{n - 2}(z))$ and $\tau(\varphi(z)) = (X_0(z):\varepsilon X_1(z): \ldots : \varepsilon^{n - 1}X_{n - 1}(z))$ are in $C_n$. In addition, we will assume the existence of the point $c \in C$ that satisfies $X_i(c) = 0 \Leftrightarrow i = 0$.

Note that such embeddings exist: the functions $\{x_m\}_{m \in \mathbb{Z}/n\mathbb{Z}}$ considered in \cite[Section I.2]{Hul} and defined using Weierstrass sigma-functions induce an immersion satisfying the above conditions (see \cite[Theorems I.2.3 and I.2.5]{Hul}). In this case we can take $c := \frac{\omega_1}{2} + \frac{\omega_2}{2n}$. For convenience, we will assume that $C$ is embedded by these functions, although all results (excluding the values of $\alpha, \beta$ and $\gamma$ in Section 3) and proofs are valid for any immersion with the mentioned properties.

Let us finish the section with three classical results.

\begin{lemma}[\cite{Hul}, Lemma IV.1.1]
\label{lemka}
Let $P_1, \ldots , P_k$ be $k$ different points on $C_n$. Then these points are linearly independent if $k \leq n - 1$, i.e. they span a subspace of dimension $k-1$.

\end{lemma}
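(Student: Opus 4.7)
The plan is to exploit the fact that $C_n$ is embedded in $\mathbb{P}^{n-1}$ by the complete linear system $|\mathcal{O}_C(n\cdot O)|$, so hyperplane sections of $C_n$ correspond precisely to global sections of the line bundle $\mathcal{O}_C(n\cdot O)$. Linear independence of $P_1,\ldots,P_k$ in $\mathbb{P}^{n-1}$ is then equivalent to the statement that these points impose $k$ independent linear conditions on $|\mathcal{O}_C(n\cdot O)|$, which will be verified by computing the dimension of the subspace of sections vanishing at all the $P_i$.

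First I would observe that the set of hyperplanes of $\mathbb{P}^{n-1}$ passing through $P_1,\ldots,P_k$ has codimension equal to $\dim \mathrm{span}(P_1,\ldots,P_k)+1$ in $H^0(\mathcal{O}_{\mathbb{P}^{n-1}}(1))$; so $P_1,\ldots,P_k$ span a $(k-1)$-plane precisely when this codimension equals $k$. Pulling back via $\varphi$, the hyperplanes through all the $P_i$ correspond, under the isomorphism $H^0(\mathcal{O}_{\mathbb{P}^{n-1}}(1))\cong H^0(\mathcal{O}_C(n\cdot O))$ coming from projective normality in degree $1$, to global sections of the twisted sheaf $\mathcal{O}_C(n\cdot O - P_1 - \cdots - P_k)$.

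Next I would compute the dimension of $H^0(\mathcal{O}_C(n\cdot O - P_1 - \cdots - P_k))$ by Riemann--Roch on the elliptic curve $C$. The divisor $D = n\cdot O - P_1 - \cdots - P_k$ has degree $n - k$, and the assumption $k\leq n-1$ gives $\deg D \geq 1 > 0$. On an elliptic curve the canonical bundle is trivial, so Serre duality yields $h^1(D) = h^0(-D) = 0$ whenever $\deg D > 0$, and Riemann--Roch then reads $h^0(D) = \deg D = n-k$. Consequently, the space of hyperplanes through $P_1,\ldots,P_k$ has codimension exactly $n-(n-k)=k$ in $H^0(\mathcal{O}_C(n\cdot O))$, as desired.

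I do not foresee a genuine obstacle; the only point requiring some care is the identification of hyperplanes through a subscheme of $C_n$ with sections of the twisted sheaf, which rests on $C_n$ being embedded by a \emph{complete} linear system (a standard consequence of the Riemann--Roch theorem for the divisor $n\cdot O$ of degree $n \geq 3$). Once that identification is in place, the conclusion is immediate from the Riemann--Roch computation for the twisted divisor.
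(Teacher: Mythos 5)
Your argument is correct: identifying hyperplanes through the $P_i$ with sections of $\mathcal{O}_C(n\cdot O - P_1 - \cdots - P_k)$ and computing $h^0 = n-k$ by Riemann--Roch (using $h^1 = 0$ since the divisor has positive degree on an elliptic curve) gives exactly the independence of the $k$ conditions. The paper itself offers no proof, quoting the statement from Hulek (Lemma IV.1.1); your Riemann--Roch argument is the standard one and is essentially the proof found in that source, so there is nothing to add.
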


\begin{proposition}[\cite{Hul}, Proposition IV.1.2]
\label{propka}
Every elliptic curve $C_n$ of degree $n \geq 3$ is projectively normal. 

\end{proposition}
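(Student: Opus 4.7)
The plan is to prove projective normality by showing that the restriction map
\[
\rho_k \colon H^0(\mathcal{O}_{\mathbb{P}^{n-1}}(k)) \to H^0(\mathcal{O}_{C_n}(k))
\]
is surjective for every $k \ge 1$. The case $k = 1$ is automatic, since $\varphi$ is the embedding associated to the complete linear system $|nO|$. For higher $k$ standard arguments reduce the problem to the surjectivity of all multiplication maps
\[
\mu_k \colon H^0(\mathcal{O}_C(1)) \otimes H^0(\mathcal{O}_C(k)) \to H^0(\mathcal{O}_C(k+1)), \qquad k \ge 1,
\]
because $\rho_k$ factors through the composition $H^0(\mathcal{O}_{\mathbb{P}^{n-1}}(k)) \twoheadrightarrow \operatorname{Sym}^k H^0(\mathcal{O}_C(1)) \to H^0(\mathcal{O}_C(k))$. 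So my target is to prove that every $\mu_k$ is onto.

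For the induction step $k \ge 2$ I would apply the base-point-free pencil trick. The line bundle $\mathcal{O}_C(1)$ is very ample of degree $n \ge 3 > 2g$, so one can pick a two-dimensional base-point-free subspace $V \subset H^0(\mathcal{O}_C(1))$. Twisting the Koszul sequence $0 \to \mathcal{O}_C(-1) \to V \otimes \mathcal{O}_C \to \mathcal{O}_C(1) \to 0$ by $\mathcal{O}_C(k)$ and taking cohomology yields the exact piece
\[
V \otimes H^0(\mathcal{O}_C(k)) \to H^0(\mathcal{O}_C(k+1)) \to H^1(\mathcal{O}_C(k-1)).
\]
For $k \ge 2$ we have $\deg \mathcal{O}_C(k-1) = (k-1)n > 0$ on the elliptic curve $C$, so $H^1(\mathcal{O}_C(k-1)) = 0$ and $\mu_k$ is surjective.

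The case $k = 1$ is where I expect to fight hardest. The identical computation there leaves a cokernel of dimension $h^1(\mathcal{O}_C) = 1$, so no individual pencil $V \subset H^0(\mathcal{O}_C(1))$ alone saturates $H^0(\mathcal{O}_C(2))$ by multiplication. To close the gap I would either invoke the Castelnuovo--Mumford normal generation theorem — any line bundle of degree $\ge 2g + 1$ on a smooth curve of genus $g$ is normally generated, and for $g = 1$ this is exactly the hypothesis $n \ge 3$ — or exploit the Heisenberg action from Section 3: the map $\mu_1$ is equivariant for the diagonal $H_n$-action, so its image is an $H_n$-submodule of $H^0(\mathcal{O}_C(2))$, and a decomposition of both sides into isotypic components together with a character comparison forces this image to be the whole target.
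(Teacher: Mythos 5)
The paper does not prove this proposition at all: it is quoted verbatim from Hulek (Proposition IV.1.2) as one of three classical facts, so there is no internal argument to compare yours against; your proposal has to stand on its own. Its skeleton is the standard one and is correct where it is complete: the reduction of $k$-normality for all $k$ to surjectivity of the multiplication maps $\mu_k$ is fine (using that the embedding is by the complete system $|n\cdot O|$, so $H^0(\mathcal{O}_{\mathbb{P}^{n-1}}(1))\cong H^0(\mathcal{O}_C(1))$), and the base-point-free pencil trick with $H^1(\mathcal{O}_C(k-1))=0$ for $k\ge 2$ settles all higher multiplication maps.

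The only delicate point is, as you say, $k=1$, and here your two proposed endgames are of unequal value. Invoking the Castelnuovo--Mumford normal generation theorem for line bundles of degree $\ge 2g+1$ is legitimate, but note that this theorem \emph{is} the proposition (its $g=1$ case already gives projective normality outright), so with that option your whole argument collapses to a citation and the pencil-trick work becomes redundant --- acceptable, but be honest that this is what is happening. The Heisenberg alternative, as sketched, has a genuine gap: equivariance only tells you that $\operatorname{im}\mu_1$ is an $H_n$-submodule of $H^0(\mathcal{O}_C(2))$, and a ``character comparison'' cannot by itself exclude the possibility that the image misses an entire isotypic component; to rule that out you must verify that no irreducible constituent of $\Symm^2H^0(\mathcal{O}_C(1))$ maps to zero in $H^0(\mathcal{O}_C(2))$, i.e.\ that no such constituent consists entirely of quadrics vanishing on $C_n$ --- exactly the kind of pointwise nonvanishing check the paper carries out for $n=6$ in Theorem \ref{propcz}, where constituents occur in $S^2V^*$ with multiplicity two and one argues via the point $c$ with $x_i(c)=0\Leftrightarrow i=0$. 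Moreover that check presupposes knowing $h^0(\mathcal{I}_{C_6}(2))=9$, which the paper deduces \emph{from} projective normality, so running the representation-theoretic route without circularity requires extra input. A cleaner elementary fix for $k=1$ is to observe that the pencil trick gives an image of codimension exactly $h^1(\mathcal{O}_C)=1$ and then to show that two suitably chosen pencils have different cokernels (or to use Mumford's refinement that $H^0(L)\otimes H^0(M)\to H^0(L\otimes M)$ is onto once $\deg L\ge 2g+1$, $\deg M\ge 2g$); as written, your step for $k=1$ is either a restatement of the target theorem or an incomplete sketch.
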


\begin{proposition}[\cite{Hul}, Theorem IV.1.3]
\label{tw}
Every elliptic normal curve $C_n \subset \mathbb{P}^{n-1}$ of degree $n \geq 4$ is a scheme-theoretic intersection of the quadrics of rank 3 which contain it. 

\end{proposition}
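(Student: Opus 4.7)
The plan splits into two logically independent pieces: (a) show that the full space of quadrics $H^0(\mathcal{I}_{C_n}(2))$ cuts out $C_n$ scheme-theoretically, and (b) show that the rank $3$ quadrics already span this space. Together these two facts imply the theorem.

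For (a) I would combine Proposition \ref{propka} with a Castelnuovo--Mumford regularity argument. For an elliptic curve embedded by a line bundle of degree $n\ge 4$, standard cohomology vanishing yields that $C_n$ is $3$-regular, hence $\mathcal{I}_{C_n}$ is $2$-regular; in particular $I(C_n)$ is generated by its degree $2$ part. A Riemann--Roch computation on $C_n$ gives $h^0(\mathcal{O}_{C_n}(2)) = 2n$, and projective normality then yields
$$\dim H^0(\mathcal{I}_{C_n}(2)) = \binom{n+1}{2} - 2n = \tfrac{n(n-3)}{2}.$$

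For (b) I would exploit the Heisenberg symmetry. Let $W := H^0(\mathcal{I}_{C_n}(2))$, on which $H_n$ acts by restriction from $\Symm^2 V^{*}$. The rank stratification on $W$ is $H_n$-invariant, so the locus $\{Q\in W : \operatorname{rank} Q \le 3\}$ is a union of $H_n$-orbits. I would first exhibit a single rank $3$ quadric $Q_0 \in W$, for example as the unique quadric containing a suitable translation scroll $\mathcal{S}_t = \bigcup_{z\in C}\overline{\varphi(z),\varphi(z+t)}$: for carefully chosen $t$, this degree-$n$ surface contains $C_n$ and lies on a rank $3$ cone whose two rulings correspond to the two endpoints of each chord. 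The $H_n$-orbit of $Q_0$ then sits inside $W$, and using the decomposition of $W$ into $H_n$-isotypic components (via the irreducibility of the Schrödinger representation $V$) I would check that the linear span of this orbit meets every component, forcing it to equal $W$.

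The main obstacle is establishing that $Q_0$ really has rank $3$. Explicit verification amounts to showing that all $4\times 4$ minors of the Gram matrix of $Q_0$ vanish identically while some $3\times 3$ minor does not, which typically requires nontrivial theta-function or Weierstrass sigma-function identities among the coordinate functions $X_i$. An alternative, more symmetry-first route is to pin down which $H_n$-isotypic component of $\Symm^2 V^{*}$ supports the rank $3$ stratum from the outset and then produce $Q_0$ as an invariant vector there; this trades the theta identities for a nontrivial computation in the projective representation theory of $H_n$.
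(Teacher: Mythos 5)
This proposition is not proved in the paper at all: it is quoted verbatim from Hulek (\cite{Hul}, Theorem IV.1.3), whose argument works directly with an explicit family of rank~3 cones and checks, point by point (using the general position statement of Lemma \ref{lemka}), that they cut out $C_n$ set-theoretically and on tangent spaces. Your two-step reduction ((a) all quadrics through $C_n$ cut it out scheme-theoretically, (b) the rank~3 ones span $H^0(\mathcal{I}_{C_n}(2))$) is a legitimate architecture, but both steps have genuine gaps as you have set them up. For (a), the regularity claim is false: by the paper's own Lemma \ref{tra}, $\mathcal{I}_{C}$ is $m$-regular iff $C$ is $(m-1)$-normal and $H^1(\mathcal{O}_C(m-2))=0$, and for $m=2$ the second condition reads $H^1(\mathcal{O}_{C_n})=0$, which fails for a genus~$1$ curve (equivalently $h^2(\mathcal{I}_{C_n})=h^1(\mathcal{O}_{C_n})=1$). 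So $\mathcal{I}_{C_n}$ is only $3$-regular, and ``$I(C_n)$ is generated in degree $2$'' does not follow from your argument; it needs the separate classical theorem on embeddings by line bundles of degree $\ge 2g+3=5$ together with the complete-intersection case $n=4$, or a direct proof that the quadrics through $C_n$ generate $\mathcal{I}_{C_n}$ as a sheaf. The dimension count $\tfrac{n(n-3)}{2}$ is correct but does not address this.

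For (b), the proposed source of a rank~3 quadric is the wrong scroll. The chords $\overline{\varphi(z)\varphi(z+t)}$ of a translation scroll are parametrized by $C$ (or by $C/\langle t\rangle$ when $2t=0$), i.e.\ by an elliptic base, whereas the ruling planes of a rank~3 cone are parametrized by the conic $\mathbb{P}^1$ and cut on $C_n$ the fibres of a degree~$2$ map $C\to\mathbb{P}^1$; such maps come from involutions $z\mapsto e-z$, never from translations. The standard construction is: take a degree~$2$ line bundle $N$ with basis $s,t$ of $H^0(N)$ and a section $u$ of $L\otimes N^{-2}$; then $s^2u,\,stu,\,t^2u\in H^0(L)$ give linear forms $\ell_0,\ell_1,\ell_2$ and $\ell_0\ell_2-\ell_1^2$ is a quadric of rank $\le 3$ through $C_n$ (the involution scroll $\bigcup_z\overline{\varphi(z)\varphi(e-z)}$ lies on it). Even after this repair, the spanning step is not established: the linear span of the $H_n$-orbit of a single $Q_0$ equals $W=H^0(\mathcal{I}_{C_n}(2))$ only if $W$ is multiplicity-free as an $H_n$-module and $Q_0$ has nonzero component in every isotypic summand; neither is checked (for $n=6$ the multiplicity-freeness is exactly the analysis in Theorem \ref{propcz}, but for general $n$ it requires proof, and the nonvanishing of all components of $Q_0$ is a genuine computation). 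Hulek's proof avoids both difficulties, since it needs neither generation of $I(C_n)$ in degree $2$ nor any spanning statement; if you want to keep your route, the honest work is precisely the two points above.
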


\section{Elliptic normal curves and quadric hypersurfaces}

In this section we want to determine the space of quadric hypersurfaces through an elliptic normal curve of degree 6. 

\begin{theorem}
\label{propcz}

Let $C = \mathbb{C} / (\mathbb{Z}\omega_1 + \mathbb{Z}\omega_2)$ be an elliptic curve and $C_6 \subset \mathbb{P}^5$ be its embedding as a normal elliptic sextic. Then there exists a 9-dimensional space of quadric hypersurfaces containing $C_6$. A basis of this space is given by \begin{center}\begin{tabular}{ccl}\
$Q_0$  & $=$ & $x_0^2 + x_3^2 + \alpha(x_2x_4 + x_5x_1)$, \\
$Q_1$  & $=$ & $x_1^2 + x_4^2 + \alpha(x_3x_5 + x_0x_2)$,  \\
$Q_2$  & $=$ & $x_2^2 + x_5^2 + \alpha(x_4x_0 + x_1x_3)$, \\
$Q_0'$  & $=$ & $x_0^2 - x_3^2 + \beta(x_2x_4 - x_5x_1)$, \\
$Q_1'$  & $=$ & $x_1^2 - x_4^2 + \beta(x_3x_5 - x_0x_2)$,  \\
$Q_2'$  & $=$ & $x_2^2 - x_5^2 + \beta(x_4x_0 - x_1x_3)$, \\
$Q_0''$  & $=$ & $x_0x_1 + x_3x_4 + \gamma x_2x_5$, \\
$Q_1''$  & $=$ & $x_1x_2 + x_4x_5 + \gamma x_3x_0$,  \\
$Q_2''$  & $=$ & $x_2x_3 + x_5x_0 + \gamma x_4x_1$
\end{tabular}\end{center}

with \begin{center}\begin{tabular}{ccl}\
$\alpha$  & $=$ & $-\frac{x_3^2(\omega)}{x_2(\omega)x_4(\omega) + x_5(\omega)x_1(\omega)}$, \\
$\beta$  & $=$ & $\frac{x_3^2(\omega)}{x_2(\omega)x_4(\omega) - x_5(\omega)x_1(\omega)}$,  \\
$\gamma$  & $=$ & $-\frac{x_3(\omega)x_4(\omega)}{x_2(\omega)x_5(\omega)}$
\end{tabular}\end{center} 

where $\omega = \frac{\omega_1}{2} + \frac{\omega_2}{12}$.

\end{theorem}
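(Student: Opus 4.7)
The plan is to combine a dimension count with a Heisenberg-equivariant decomposition that reduces the problem to a single evaluation at the point $\varphi(\omega)$.

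First I would apply Riemann--Roch: since $\mathcal{O}_{C_6}(2) \cong \mathcal{O}_C(12O)$ has degree $12$, we have $h^0(\mathcal{O}_{C_6}(2)) = 12$, and Proposition~\ref{propka} supplies projective normality, so the restriction $H^0(\mathcal{O}_{\mathbb{P}^5}(2)) \to H^0(\mathcal{O}_{C_6}(2))$ is surjective and $h^0(\mathcal{I}_{C_6}(2)) = 21 - 12 = 9$. Next I would decompose $W := H^0(\mathcal{O}_{\mathbb{P}^5}(2)) = \Symm^2(\mathbb{C}^6)^*$ into $\tau$-weight spaces $W = W_0 \oplus \cdots \oplus W_5$, where $W_w = \spann\{x_ix_j : i+j \equiv w \pmod 6\}$ has dimension $4$ for $w$ even and $3$ for $w$ odd. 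Since $C_6$ is $H_6$-invariant, $I(C_6)_2$ respects this grading, and $\sigma$ cyclically identifies $(W_0, W_2, W_4)$ and $(W_1, W_3, W_5)$ together with their intersections with $I(C_6)_2$. Because $\tau$ acts on $C_6$ as translation by a non-trivial $6$-torsion point, a Lefschetz-trace argument (no fixed points and $H^1 = 0$) forces each of its six eigenspaces on $H^0(\mathcal{O}_{C_6}(2))$ to be $2$-dimensional, whence $\dim(I(C_6)_2 \cap W_w)$ equals $2$ for $w$ even and $1$ for $w$ odd, totalling $9$.

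To pin down the generators in $W_0$, I would decompose further via the involution $\sigma^3$, which exchanges $x_i \leftrightarrow x_{i+3}$ and splits $W_0 = W_0^+ \oplus W_0^-$ with $W_0^+ = \langle x_0^2 + x_3^2,\, x_2x_4 + x_1x_5\rangle$ and $W_0^- = \langle x_0^2 - x_3^2,\, x_2x_4 - x_1x_5\rangle$. Both summands map non-trivially to $H^0(\mathcal{O}_{C_6}(2))$ (for example $X_0^2 \pm X_3^2$ are not identically zero), so each intersection $I(C_6)_2 \cap W_0^\pm$ is exactly one-dimensional. The hypothesis $X_i(\omega) = 0 \Leftrightarrow i = 0$ makes $x_0^2 \pm x_3^2$ evaluate to $\pm X_3(\omega)^2 \neq 0$ at $\varphi(\omega)$, so imposing vanishing at this single point uniquely determines the elements of these one-dimensional kernels and yields $Q_0$ and $Q_0'$ with the stated coefficients $\alpha$ and $\beta$.

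For the odd weights, $I(C_6)_2 \cap W_1 = I(C_6)_2 \cap \spann\{x_0x_1, x_2x_5, x_3x_4\}$ is one-dimensional; since $X_0(\omega) = 0$, the $x_0x_1$ term drops out of the vanishing condition at $\varphi(\omega)$, leaving the relation that forces $\gamma$ as stated and produces $Q_0''$. Applying $\sigma$ generates the remaining six elements of the basis, and the whole list is linearly independent because its members lie in distinct joint $(\tau, \sigma^3)$-eigenspaces. The main delicate point to verify is that the $\tau$-eigenspace decomposition of $H^0(\mathcal{O}_{C_6}(2))$ is uniformly $2$-dimensional and that $\sigma^3$ acts non-trivially on each piece restricted from an even-weight $W_w$; once this is in hand, the seemingly global vanishing condition on $C_6$ reduces cleanly to the pointwise condition at $\varphi(\omega)$.
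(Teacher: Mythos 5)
Your route differs from the paper's (the paper decomposes $\Symm^2V^*$ into seven irreducible $3$-dimensional $H_6$-modules and analyses which are isomorphic, while you use the $\tau$-weight decomposition plus a holomorphic Lefschetz trace argument to get that every $\tau$-eigenvalue has multiplicity $2$ in $H^0(\mathcal{O}_{C_6}(2))$), and that first half is sound: it does give $\dim\bigl(I(C_6)_2\cap W_w\bigr)=2$ for $w$ even and $1$ for $w$ odd, hence the total $9$. The genuine gap is in how you pin down the odd-weight generators. The space $W_1=\langle x_0x_1,x_3x_4,x_2x_5\rangle$ is $3$-dimensional, so the single linear condition ``vanish at $\varphi(\omega)$'' cuts out a $2$-dimensional subspace of $W_1$ and cannot by itself identify the line $I(C_6)_2\cap W_1$. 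Writing its generator as $a\,x_0x_1+b\,x_3x_4+c\,x_2x_5$, evaluation at $\varphi(\omega)$ only yields $b\,X_3(\omega)X_4(\omega)+c\,X_2(\omega)X_5(\omega)=0$; nothing in your argument forces $a=b$, which is precisely what the claimed form $Q_0''=x_0x_1+x_3x_4+\gamma x_2x_5$ asserts. In the paper this is the content of $W_2\subset V_1^+\oplus V_3$, i.e.\ the exclusion of any component along $x_0x_1-x_3x_4$. The repair stays inside your framework: $I(C_6)_2\cap W_1$ is a $\sigma^3$-stable line, hence an eigenline of $\sigma^3$ on $W_1$; the $(-1)$-eigenline $\langle x_0x_1-x_3x_4\rangle$ is excluded because that form equals $-X_3(\omega)X_4(\omega)\neq0$ at $\varphi(\omega)$, so the line lies in $\langle x_0x_1+x_3x_4,\,x_2x_5\rangle$, giving $a=b$; and $a\neq0$ because $x_2x_5\in I(C_6)_2$ would force $\sigma(x_2x_5)=x_1x_4\in I(C_6)_2$, contradicting $X_1(\omega)X_4(\omega)\neq0$.

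A smaller instance of the same omission appears in the even weights: to normalize the generator of $I(C_6)_2\cap W_0^{+}$ as $x_0^2+x_3^2+\alpha(x_2x_4+x_1x_5)$ with $\alpha$ finite, you must rule out that this line is $\langle x_2x_4+x_1x_5\rangle$; $\sigma$-equivariance does it, since $\sigma^2(x_2x_4+x_1x_5)=x_0x_2+x_3x_5$ does not vanish at $\varphi(\omega)$, and similarly for $W_0^-$ (the paper's ``no $W_i$ equals a summand of the decomposition'' step plays exactly this role). Finally, your Lefschetz computation needs all powers $\tau^k$, $1\le k\le 5$, to be fixed-point free (i.e.\ $\tau$ induces translation by a point of exact order $6$), not just $\tau$ itself, since the multiplicity count averages the traces of all six powers; this holds for the embedding used in the paper but should be stated.
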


\begin{proof}

Consider the following exact sequence: 
\[ \begin{tikzcd}
0 \arrow{r} & \mathcal{I}_{C_6}(2) \arrow{r} & \mathcal{O}_{\mathbb{P}^5}(2) \arrow{r} & \mathcal{O}_{C_6}(2) \arrow{r} & 0. 
\end{tikzcd}
\]
Since $C_6$ is projectively normal, we obtain $$h^0(\mathcal{I}_{C_6}(2)) = h^0(\mathcal{O}_{\mathbb{P}^5}(2)) - h^0(\mathcal{O}_{C_6}(2)) = 21 - 12 = 9.$$ 
In what follows, we often use two basic facts from representation theory which can be found in \cite[Proposition 1.5 and Proposition 1.8]{FH}. We leave related computations to the reader. 
Let $V^* = H^0(\mathcal{O}_{\mathbb{P}^5}(1))$. 
In order to find $H^0(\mathcal{I}_{C_6}(2))$ consider the $H_6$-module $$S^2V^* = H^0(\mathcal{O}_{\mathbb{P}^5}(2)),$$ 
which has the following decomposition  into irreducible $H_6$ modules: \begin{equation}\label{eq1}S^2V^* = V_0^+ \oplus V_0^-\oplus V_1^+ \oplus V_1^- \oplus V_2^+ \oplus V_2^- \oplus V_3,\end{equation} 
where 
\begin{center}\begin{tabular}{ccl}\
$V_0^{\pm}$  & = & $\langle x_0^2 \pm x_3^2, x_1^2 \pm x_4^2, x_2^2 \pm x_5^2 \rangle$, \\
$V_1^{\pm}$ & = & $\langle x_0x_1 \pm x_3x_4, x_1x_2 \pm x_4x_5, x_2x_3 \pm x_5x_0 \rangle$, \\
$V_2^{\pm}$ & = & $\langle x_0x_2 \pm x_3x_5, x_1x_3 \pm x_4x_0, x_2x_4 \pm x_5x_1 \rangle$, \\
$V_3$ & = & $\langle x_0x_3, x_1x_4, x_2x_5 \rangle$.
\end{tabular}\end{center} 

Let $V_{\tau}$ be the subspace of $S^2V^*$ consisting of elements invariant under $\tau$. Then obviously $$V_{\tau} = \langle x_0^2, x_3^2, x_2x_4, x_5x_1 \rangle.$$ 

Denote by $H_6(V_\tau)$ the smallest subspace of $S^2V^*$ containing $V_{\tau}$ and invariant under $H_6$. Then $$H_6(V_{\tau}) = V_0^{+} \oplus V_0^{-} \oplus V_2^{+} \oplus V_2^{-}.$$ 
Analogously, $$H_6(V_{\sigma^3}) = V_0^+ \oplus V_1^+ \oplus V_2^+ \oplus V_3.$$ 

From those two decompositions we can conclude that $V_0^+$ can be isomorphic (as an $H_6$-module) only to $V_2^+$ , $V_0^-$ can be isomorphic (as an $H_6$-module) only to $V_2^-$ and $V_1^+$ only to $V_3$. Those pairs of $H_6$-modules are indeed isomorphic, hence those are all isomorphisms between the modules of decomposition \eqref{eq1}.

We have decomposition of $H^0(\mathcal{I}_{C_6}(2))$ into irreducible $H_6$-modules $$H^0(\mathcal{I}_{C_6}(2)) = W_0 \oplus W_1 \oplus W_2$$ 
with $W_i$ invariant under $H_6$ and $\dim(W_i) = 3$. Note that the intersection of each $W_i$ with each $H_6-$module in the decomposition \eqref{eq1} is either trivial or the whole $W_i$. It cannot be $W_i$ in any of these cases because then there is a module in the decomposition \eqref{eq1} which is contained in $H^0(\mathcal{I}_{C_6}(2))$, which is impossible. Indeed, since $x_i(\frac12(\omega_1 + \frac{\omega_2}{6})) = 0$ if and only if $i = 0$ we can see that each module contains a generator which does not vanish on the whole $C_6$. It follows that none of $W_i$ is isomorphic to $V_1^-$ as $H_6$-modules. Indeed, otherwise such $W_i$ would be equal to $V_1^-$. 

Now, if any two of $W_i$'s (suppose that $W_0$ and $W_1$ without loss of generality) are isomorphic to the same $H_6$-module in decomposition \eqref{eq1} (suppose that $V_0^+$ without loss of generality) then, since there is only one subrepresentation of $S^2V^*$ isomorphic to $V_0^+ \oplus V_2^+$, we have $W_0 \oplus W_1 = V_0^+ \oplus V_2^+$ hence $V_0^+ \subset H^0(\mathcal{I}_{C_6}(2))$ - contradiction. Thus,
\begin{center}\begin{tabular}{ccl}\
$W_0$  & $\subset$ & $V_0^+ \oplus V_2^+$, \\
$W_1$  & $\subset$ & $V_0^- \oplus V_2^-$,  \\
$W_2$  & $\subset$ & $V_1^+ \oplus V_3$ 
\end{tabular}\end{center} 
up to a permutation of $W_i$'s. 

Since $W_0$ is isomorphic to $V_0^{+}$ and $V_2^+$ as an $H_6$-module we can choose a basis $Q_0,Q_1,Q_2$ of $W_0$ such that $Q_{i+1} = \sigma(Q_i)$ and $\tau(Q_0) = Q_0$. Hence \begin{center}\begin{tabular}{ccl}\
$Q_0$  & $=$ & $x_0^2 + x_3^2 + \alpha(x_2x_4 + x_5x_1)$, \\
$Q_1$  & $=$ & $x_1^2 + x_4^2 + \alpha(x_3x_5 + x_0x_2)$,  \\
$Q_2$  & $=$ & $x_2^2 + x_5^2 + \alpha(x_4x_0 + x_1x_3)$
\end{tabular}\end{center} 

with $$\alpha = -\frac{x_3^2(\omega)}{x_2(\omega)x_4(\omega) + x_5(\omega)x_1(\omega)}, \:\:\:\: \text{where } \omega = \frac{\omega_1}{2} + \frac{\omega_2}{12}.$$

Analogously, the basis of $W_1$ is \begin{center}\begin{tabular}{ccl}\
$Q_0'$  & $=$ & $x_0^2 - x_3^2 + \beta(x_2x_4 - x_5x_1)$, \\
$Q_1'$  & $=$ & $x_1^2 - x_4^2 + \beta(x_3x_5 - x_0x_2)$,  \\
$Q_2'$  & $=$ & $x_2^2 - x_5^2 + \beta(x_4x_0 - x_1x_3)$
\end{tabular}\end{center} 

with $$\beta= \frac{x_3^2(\omega)}{x_2(\omega)x_4(\omega) - x_5(\omega)x_1(\omega)}, \:\:\:\: \text{where } \omega = \frac{\omega_1}{2} + \frac{\omega_2}{12}.$$

Since $W_2$ is isomorphic to $V_1^{+}$ and $V_3$ as an $H_6$-module we can choose a basis $Q_0'',Q_1'',Q_2''$ of $W_2$ such that $Q_{i+1}'' = \sigma(Q_i'')$ and $\tau(Q_0'') = \varepsilon \cdot Q_0''$. Hence \begin{center}\begin{tabular}{ccl}\
$Q_0''$  & $=$ & $x_0x_1 + x_3x_4 + \gamma x_2x_5$, \\
$Q_1''$  & $=$ & $x_1x_2 + x_4x_5 + \gamma x_3x_0$,  \\
$Q_2''$  & $=$ & $x_2x_3 + x_5x_0 + \gamma x_4x_1$
\end{tabular}\end{center} 

with $$\gamma = -\frac{x_3(\omega)x_4(\omega)}{x_2(\omega)x_5(\omega)}, \:\:\:\: \text{where } \omega = \frac{\omega_1}{2} + \frac{\omega_2}{12}.$$ 

Thus, we have determined the basis of $H^0(\mathcal{I}_{C_6}(2))$.

\end{proof}

\begin{remark}

In \cite[Proposition 7.7 and Section 9]{KK} Kaneko and Kuwata use different method based on the properties of theta-functions and the classical Jacobi's identity and obtain similar results. If we denote the space $H^0(\mathcal{I}_{C_6}(2)) \cap \{x_ix_j \: | \: i + j \equiv k \text{ mod } 6\}$ by $V_k$ one can see that both $$a_1^2X_0^2 + a_2^2X_3^2 - a_0^2X_1X_5 - a_3^2X_2X_4,$$ $$a_2^2X_0^2 + a_1^2X_3^2 - a_3^2X_1X_5 - a_0^2X_2X_4$$ found by them and our $Q_0, Q_0'$ from Theorem \ref{propcz} form two bases of the space $V_0$ and similarly for $V_k$ with $0 < k < 6$.

\end{remark}

\begin{lemma}
Let $\alpha, \beta$ and $\gamma$ denote the constants defined in Theorem \ref{propcz}. Then the following relations hold: $$\alpha\beta(\alpha + \beta) = -2,$$ $$\gamma = \alpha\beta.$$ 
\end{lemma}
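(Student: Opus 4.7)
The plan is to exploit the fact that the nine quadrics constructed in Theorem~\ref{propcz} vanish not only at the distinguished point $\omega = \omega_1/2 + \omega_2/12$ (which pins down $\alpha, \beta, \gamma$) but on all of $C_6$. Evaluating each basis quadric at $\omega$ therefore produces nine polynomial relations among the values $X_i := x_i(\omega)$, where $X_0 = 0$ and $X_i \neq 0$ for $i \neq 0$ by hypothesis. The three equations $Q_0(\omega) = Q_0'(\omega) = Q_0''(\omega) = 0$ merely recover the definitions of $\alpha, \beta, \gamma$; the other six encode the identities to be proved.

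First I would solve the pair $Q_0(\omega) = Q_0'(\omega) = 0$ as a linear system in the unknowns $X_2 X_4$ and $X_1 X_5$ to obtain
\[
X_2 X_4 = \frac{(\alpha - \beta) X_3^2}{2\alpha\beta}, \qquad X_1 X_5 = -\frac{(\alpha + \beta) X_3^2}{2\alpha\beta},
\]
and symmetrically extract $X_5^2 = -(\alpha + \beta) X_1 X_3/2$ by summing and differencing $Q_2(\omega) = 0$ and $Q_2'(\omega) = 0$. The two scalar equalities $X_3 X_4 = -\gamma X_2 X_5$ and $X_2 X_3 = -\gamma X_1 X_4$ (from $Q_0''(\omega) = Q_2''(\omega) = 0$), multiplied together and divided by $X_2 X_4$, collapse to $X_3^2 = \gamma^2 X_1 X_5$; substituting the first formula yields $\gamma^2(\alpha + \beta) = -2\alpha\beta$. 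The analogous manipulation with $Q_1''(\omega) = 0$ (which reads $X_1 X_2 + X_4 X_5 = 0$) in place of $Q_2''(\omega) = 0$ gives $X_1 X_3 = \gamma X_5^2$, whence $\gamma(\alpha + \beta) = -2$. Dividing these two scalar identities produces $\gamma = \alpha\beta$, and substituting back gives $\alpha\beta(\alpha + \beta) = -2$.

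The only delicate point is justifying the cancellations: $X_2 X_4$, $X_1 X_3$, and the scalar $\alpha + \beta$ must all be nonzero. The first two follow immediately from $X_i(\omega) \neq 0$ for $i \neq 0$, and $\alpha + \beta = 0$ would force $X_1 X_5 = 0$ via the formula above, contradicting the same hypothesis. I do not anticipate any deeper obstacle, since once the $9$-dimensional space of quadrics through $C_6$ has been exhibited in Theorem~\ref{propcz}, both relations are rigid algebraic consequences of evaluating the basis at the single point $\omega$.
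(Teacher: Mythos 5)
Your proposal is correct and follows essentially the same route as the paper: both evaluate the basis quadrics of Theorem \ref{propcz} at the point $\omega$, use $x_0(\omega)=0$ and $x_i(\omega)\neq 0$ for $i\neq 0$, and eliminate the products $x_i(\omega)x_j(\omega)$ by elementary algebra (your cancellation checks, including $\alpha+\beta\neq 0$, are handled the same way in the paper). The only difference is bookkeeping — the paper extracts $\alpha+\beta$ from the pairs $(Q_1,Q_1')$ and $(Q_2,Q_2')$ and then brings in $Q_0'',Q_1''$, while you route through the two identities $\gamma^2(\alpha+\beta)=-2\alpha\beta$ and $\gamma(\alpha+\beta)=-2$ — which is an inessential variation.
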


\begin{proof}

    Let $\omega = \frac{\omega_1}{2} + \frac{\omega_2}{12}$ be as in the previous theorem. Note that $\alpha, \beta$ and $\gamma$ are non-zero because $x_i(\omega) = 0 \Leftrightarrow i = 0$.  Using the equations defining $Q_0$ and $Q_0'$ we obtain 
    \begin{align*} 
    \frac{1}{\alpha} &= -\frac{x_2(\omega)x_4(\omega) + x_5(\omega)x_1(\omega)}{x_3^2(\omega)}, \\ 
    \frac{1}{\beta} &= \frac{x_2(\omega)x_4(\omega) - x_5(\omega)x_1(\omega)}{x_3^2(\omega)}.
    \end{align*}
    
    Hence $$\frac{1}{\alpha} + \frac{1}{\beta} =- \frac{2x_5(\omega)x_1(\omega)}{x_3^2(\omega)}.$$
    
    and, in particular, $\alpha \neq - \beta$.
    
    Now, using the equations defining $Q_1$ and $Q_1'$ we get 
    $$\alpha + \beta = -\frac{2x_1^2(\omega)}{x_3(\omega)x_5(\omega)}$$
    
    Analogously, from the equations of $Q_2$ and $Q_2'$ \begin{equation}\label{eqq}\alpha + \beta = -\frac{2x_5^2(\omega)}{x_1(\omega)x_3(\omega)}\end{equation}
    
    Multiplying the equations above we obtain $$(\alpha + \beta)^2 = \frac{4x_1(\omega)x_5(\omega)}{x_3^2(\omega)} = -2\left(\frac{1}{\alpha} + \frac{1}{\beta}\right).$$ 
    
    We can multiply both sides by $\frac{\alpha\beta}{\alpha + \beta}$ to finally get \begin{equation}\label{eqq2}(\alpha + \beta)\alpha\beta = -2.\end{equation}
    
    From equations (\ref{eqq}) and (\ref{eqq2}) we get
    $$\alpha\beta = \frac{x_1(\omega)x_3(\omega)}{x_5^2(\omega)}.$$
    
    Now, using the equations defining $Q_0''$ and $Q_1''$ we get
    \begin{align*} 
    \gamma = -\frac{x_3(\omega)x_4(\omega)}{x_2(\omega)x_5(\omega)}, \:\:\:\:\:
    x_2(\omega) = -\frac{x_4(\omega)x_5(\omega)}{x_1(\omega)}.
    \end{align*}
    
    It follows that $$\gamma = \frac{x_1(\omega)x_3(\omega)}{x_5^2(\omega)} = \alpha\beta.$$

\end{proof}

Now let us find the generators of $I(\Sec{C_6})$ using the basis we found in Proposition \ref{tw}. Let $C_n \subset \mathbb{P}^{n - 1}$ be an elliptic normal curve. Let us start with two lemmas.   

\begin{lemma}[\cite{F}, Lemma 3.2] 
\label{lemds}

If $n \geq 6$ then $I(Sec(C_n))$ is generated by cubics. A cubic form $f$ vanishes on $\Sec(C_n)$ if and only if it is singular at every point on $C_n$, equivalently $\frac{\partial f}{\partial x_i} \in I(C_n)$ for all $0 \leq i \leq n-1$.

\end{lemma}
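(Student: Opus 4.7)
The plan is to handle the two claims of the lemma separately. First, the equivalence between a cubic $f$ being singular at every point of $C_n$ and $\partial f/\partial x_i \in I(C_n)$ for every $i$ is immediate from the Jacobian criterion: a homogeneous form $f$ is singular at $P \in \mathbb{P}^{n-1}$ exactly when $\partial f/\partial x_i(P) = 0$ for all $i$, and asking this for every $P \in C_n$ is precisely the vanishing of each partial on $C_n$.

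For the characterization "$f \in I(\Sec(C_n))_3$ iff $f$ is singular at every point of $C_n$" I would parameterize a secant line $\overline{PQ}$ with $P,Q \in C_n$ distinct by $\lambda P + \mu Q$ and record the expansion
\begin{equation*}
f(\lambda P + \mu Q) = \lambda^3 f(P) + \lambda^2 \mu \sum_i Q_i \tfrac{\partial f}{\partial x_i}(P) + \lambda\mu^2 \sum_i P_i \tfrac{\partial f}{\partial x_i}(Q) + \mu^3 f(Q),
\end{equation*}
a binary cubic in $(\lambda,\mu)$. For the direction "$\Leftarrow$", if $f$ is singular at both $P$ and $Q$ then this restriction vanishes to order at least $2$ at each of $\lambda = 0$ and $\mu = 0$, hence has vanishing order $\geq 4 > 3$ and is identically zero, so the whole line lies in $V(f)$; taking closures gives $\Sec(C_n) \subset V(f)$. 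For the direction "$\Rightarrow$", if $f \in I(\Sec(C_n))$ then the binary cubic vanishes identically, and in particular the coefficient of $\lambda^2\mu$ gives $\sum_i Q_i \tfrac{\partial f}{\partial x_i}(P) = 0$ for every $Q \in C_n$. This says that the linear form $\ell(x) = \sum_i \tfrac{\partial f}{\partial x_i}(P)\, x_i$ vanishes on $C_n$, and by non-degeneracy of the embedding $\ell$ is identically zero, so $f$ is singular at $P$.

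The harder claim is that $I(\Sec(C_n))$ is \emph{generated} by cubics once $n \geq 6$. Here I would first rule out quadrics by computing $h^0(\mathcal{I}_{\Sec(C_n)}(2))$, using the exact sequence $0 \to \mathcal{I}_{\Sec(C_n)}(2) \to \mathcal{O}_{\mathbb{P}^{n-1}}(2) \to \mathcal{O}_{\Sec(C_n)}(2) \to 0$ together with projective normality of $\Sec(C_n)$ and a Riemann--Roch computation of $h^0(\mathcal{O}_{\Sec(C_n)}(2))$ on the symmetric square $C^{(2)}$ via the rank-$2$ secant bundle. Next, one exhibits enough cubics via the singularity criterion just proved (e.g.\ polynomials of the form $\sum_k x_k q_k$ with $q_k \in I(C_n)_2$ tuned so that the full gradient lies in $I(C_n)$), and finally argues that no new generators appear in degree $\geq 4$ by bounding the Castelnuovo--Mumford regularity of $\Sec(C_n)$ by $3$. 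This regularity bound is the principal obstacle: it is equivalent to arithmetic Cohen--Macaulayness of $\Sec(C_n)$ together with the expected values of $h^0(\mathcal{O}_{\Sec(C_n)}(k))$ for small $k$, and proving it in full generality requires either constructing the minimal free resolution of $\mathcal{O}_{\Sec(C_n)}$ globally (an Eagon--Northcott-type complex from the secant bundle) or establishing Koszul-type vanishing statements for appropriate line bundles on the elliptic curve.
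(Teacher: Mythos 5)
First, note that the paper itself offers no proof of this statement: it is quoted verbatim from Fisher \cite{F}, Lemma 3.2, and used as a black box, so there is no internal argument to compare yours against. Judged on its own terms, the second half of your proposal is correct and complete: the Jacobian-criterion remark is immediate, and your restriction of a cubic $f$ to a parameterized secant line $\lambda P+\mu Q$, with the observation that singularity of $f$ at $P$ and $Q$ (plus Euler's relation, which gives $f(P)=f(Q)=0$) kills all four coefficients of the binary cubic, correctly proves both directions of the equivalence ``$f\in I(\Sec(C_n))_3$ iff $f$ is singular along $C_n$''; the converse direction via the coefficient of $\lambda^2\mu$ and non-degeneracy of $C_n$ is also fine. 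This is the standard argument and is essentially how the equivalence is obtained in the cited source.

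The genuine gap is the first assertion, that $I(\Sec(C_n))$ is \emph{generated} by cubics for $n\geq 6$, which is the substantive content of the lemma. Here you give only a programme, and you yourself flag the decisive step --- bounding the Castelnuovo--Mumford regularity of $\mathcal{I}_{\Sec(C_n)}$ by $3$, equivalently establishing the arithmetic Cohen--Macaulayness/normality properties of the secant variety or constructing its resolution --- as unproved. That step is not a routine verification: it is precisely the hard theorem (proved by Fisher via explicit Heisenberg-invariant equations and resolutions, and in related form by Graf von Bothmer--Hulek via geometric syzygies), and nothing in your sketch supplies it. Moreover, the preliminary steps you propose already presuppose facts of comparable depth: ruling out quadrics ``using projective normality of $\Sec(C_n)$'' assumes $k$-normality statements for the secant variety that are part of what needs proving, so the outline is partly circular. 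As it stands, your proposal proves the characterization of cubics in $I(\Sec(C_n))$ but not the generation statement; to make it a proof you would need either to carry out the regularity/ACM argument in detail or to cite it, as the paper does, from the literature.
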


\begin{lemma}[\cite{F}, Lemma 3.4] 

The space of cubics vanishing on $\Sec(C_n)$ has dimension at most $\frac{n(n - 4)(n - 5)}{6}$. 

\end{lemma}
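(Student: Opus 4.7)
The plan is to reformulate the question as a computation of $H^0$ of the squared ideal sheaf, and then bound this from above via the conormal exact sequence. First, by Lemma \ref{lemds} together with Euler's identity $3f = \sum_i x_i\, \partial f/\partial x_i$, a cubic $f$ vanishes on $\Sec(C_n)$ if and only if $f$ vanishes to order at least two along $C_n$; hence
\[
I(\Sec(C_n))_3 = H^0(\mathcal{I}_{C_n}^2(3)),
\]
and it suffices to bound $h^0(\mathcal{I}_{C_n}^2(3))$ from above.

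Next I would twist the conormal exact sequence
\[
0 \longrightarrow \mathcal{I}_{C_n}^2 \longrightarrow \mathcal{I}_{C_n} \longrightarrow N^*_{C_n/\mathbb{P}^{n-1}} \longrightarrow 0
\]
by $\mathcal{O}_{\mathbb{P}^{n-1}}(3)$ and pass to cohomology. Proposition \ref{propka} (projective normality) gives $h^0(\mathcal{I}_{C_n}(3)) = \binom{n+2}{3} - 3n$ and $H^1(\mathcal{I}_{C_n}(3)) = 0$. Combining the Euler sequence on $\mathbb{P}^{n-1}$ with the normal bundle sequence restricted to $C_n$ yields $\deg N_{C_n/\mathbb{P}^{n-1}} = n^2$, so $N^*_{C_n} \otimes \mathcal{O}_{C_n}(3)$ is a rank $n-2$ bundle on the elliptic curve $C_n$ of degree $2n^2 - 6n$. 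Using Serre duality together with the semistability of $N_{C_n}$ (the twist $N_{C_n}(-3)$ then has negative degree and so has no global sections), one gets $h^1(N^*_{C_n}(3)) = 0$, and Riemann--Roch delivers $h^0(N^*_{C_n}(3)) = 2n^2 - 6n$. The long exact sequence rearranges to
\[
h^0(\mathcal{I}_{C_n}^2(3)) = \binom{n+2}{3} - 3n - (2n^2 - 6n) + h^1(\mathcal{I}_{C_n}^2(3)) = \frac{n(n-4)(n-5)}{6} + h^1(\mathcal{I}_{C_n}^2(3)),
\]
so the claimed inequality is equivalent to the vanishing of $H^1(\mathcal{I}_{C_n}^2(3))$.

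The hard part is therefore to establish $H^1(\mathcal{I}_{C_n}^2(3)) = 0$, equivalently surjectivity of the restriction-of-differentials map $H^0(\mathcal{I}_{C_n}(3)) \to H^0(N^*_{C_n}(3))$. Since $I(C_n)$ is generated in degree two (Castelnuovo--Mumford regularity of an elliptic normal curve), every cubic $f$ in the ideal decomposes as $f = \sum_\alpha \ell_\alpha Q_\alpha$ with $\ell_\alpha \in V^*$ and $Q_\alpha \in H^0(\mathcal{I}_{C_n}(2))$, and its image in $H^0(N^*_{C_n}(3))$ equals $\sum_\alpha \ell_\alpha|_{C_n} \cdot dQ_\alpha|_{C_n}$. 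The required surjectivity therefore reduces to surjectivity of the multiplication
\[
H^0(\mathcal{O}_{C_n}(1)) \otimes H^0(N^*_{C_n}(2)) \longrightarrow H^0(N^*_{C_n}(3)),
\]
which on an elliptic curve can be obtained from the base-point-free pencil trick once global generation of $N^*_{C_n}(2)$ is established. Securing enough control over the conormal bundle to justify both the $H^1$-vanishing and this multiplication surjectivity is the most delicate step of the argument.
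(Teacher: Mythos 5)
You should first note that the paper does not prove this statement at all --- it is quoted verbatim from \cite{F} (Lemma 3.4) --- so there is no internal argument to compare with; your proposal has to stand on its own. Your opening reduction is fine: by Lemma \ref{lemds} and Euler's relation, cubics vanishing on $\Sec(C_n)$ are exactly the cubics singular along $C_n$, i.e.\ $H^0(\mathcal{I}_{C_n}^2(3))$ (using that $C_n$ is smooth, so locally $\mathcal{I}_{C_n}^2$ is primary), and the numerology in the conormal sequence is correct: with $h^0(\mathcal{I}_{C_n}(3))=\binom{n+2}{3}-3n$ and $\deg N^*_{C_n}(3)=2n^2-6n$, the claimed bound is equivalent to the statement that the map $\rho\colon H^0(\mathcal{I}_{C_n}(3))\to H^0(N^*_{C_n}(3))$ has image of dimension at least $2n^2-6n$. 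But that statement \emph{is} the entire content of the lemma, and you explicitly defer it; as written the proof is incomplete exactly where the difficulty lies. (Incidentally, for the upper bound you do not need $h^1(N^*_{C_n}(3))=0$ or semistability of $N_{C_n}$ at all: Riemann--Roch already gives $h^0(N^*_{C_n}(3))\geq 2n^2-6n$, so surjectivity of $\rho$ alone would suffice.)

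Moreover, the route you sketch for the missing step has two concrete flaws. First, the reduction to surjectivity of $H^0(\mathcal{O}_{C_n}(1))\otimes H^0(N^*_{C_n}(2))\to H^0(N^*_{C_n}(3))$ is too coarse: writing a cubic as $\sum_\alpha \ell_\alpha Q_\alpha$ only identifies $\imm(\rho)$ with the image of $H^0(\mathcal{O}_{C_n}(1))\otimes W$, where $W=\imm\bigl(H^0(\mathcal{I}_{C_n}(2))\to H^0(N^*_{C_n}(2))\bigr)$ has dimension at most $h^0(\mathcal{I}_{C_n}(2))=n(n-3)/2$, whereas $h^0(N^*_{C_n}(2))\geq \deg N^*_{C_n}(2)=n(n-4)$; for $n\geq 6$ (e.g.\ $9<12$ when $n=6$) $W$ is a proper subspace, so even full multiplication surjectivity would not close the gap. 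Second, the base-point-free pencil trick cannot deliver that surjectivity anyway: for a pencil inside $|\mathcal{O}_{C_n}(1)|$ the obstruction is $H^1(N^*_{C_n}(1))\cong H^0(N_{C_n}(-1))^*$, and $\deg N_{C_n}(-1)=2n>0$, so $h^0(N_{C_n}(-1))\geq 2n\neq 0$. Two further inaccuracies: $\mathcal{I}_{C_n}$ has Castelnuovo--Mumford regularity $3$, not $2$ (since $H^2(\mathcal{I}_{C_n})\cong H^1(\mathcal{O}_{C_n})\neq 0$), so regularity does not give quadratic generation of $I(C_n)$ --- that is true but needs the classical result for line bundles of degree $\geq 2g+2$; and semistability of $N_{C_n}$ is itself a nontrivial unproved input. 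To finish along these lines you would have to prove surjectivity of $H^0(\mathcal{O}_{\mathbb{P}^{n-1}}(1))\otimes H^0(\mathcal{I}_{C_n}(2))\to H^0(N^*_{C_n}(3))$ directly (e.g.\ by a Heisenberg-equivariant computation or bundle-theoretic multiplication results adapted to the subspace $W$), which is precisely the hard content that the quoted lemma encapsulates.
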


We can see that $I(Sec(C_6))$ is generated by at most two cubics. Moreover, we can find explicit equations of those cubics using the second part of Lemma \ref{lemds} and Theorem \ref{propcz}. 

\begin{theorem}

Recall that $\sigma(x_n) = x_{n - 1}$ for all $n$ and $\alpha, \beta$ and $\gamma$ are defined in Theorem \ref{propcz}. Then the ideal $I(\Sec(C_6))$ is generated by two cubic surfaces $F_1$ and $F_2$ given by $$F_1 = 2(\alpha^2\beta^2 - \alpha - \beta)x_0x_2x_4 + \sum_{i = 0}^2\sigma^i(-2x_0^3 + 2(\beta - \alpha)x_1x_2x_3 + \alpha\beta(\beta - \alpha)x_0x_3^2),$$
$$F_2 = 2(\alpha^2\beta^2 - \alpha - \beta)x_1x_3x_5 + \sum_{i = 0}^2\sigma^i(-2x_1^3 + 2(\beta - \alpha)x_2x_3x_4 + \alpha\beta(\beta - \alpha)x_1x_4^2).$$

\end{theorem}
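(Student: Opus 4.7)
The plan combines the two preceding lemmas. By the dimension bound the space of cubics in $I(\Sec(C_6))$ is at most two-dimensional, and by Lemma \ref{lemds} $I(\Sec(C_6))$ is generated in that degree. It therefore suffices to exhibit two linearly independent cubics $F_1, F_2$ each of whose six partial derivatives lies in $I(C_6)_2 = H^0(\mathcal{I}_{C_6}(2))$, the nine-dimensional space spanned by $Q_i, Q_i', Q_i''$ from Theorem \ref{propcz}.

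To construct candidates, I would exploit the $H_6$-symmetry. The subspace of cubics vanishing on $\Sec(C_6)$ is $H_6$-stable, and among cubic monomials in $x_0, \ldots, x_5$ the only two fixed by $\sigma^2$ are $x_0 x_2 x_4$ and $x_1 x_3 x_5$; they mark two distinguished $\sigma^2$-orbits of monomials. One therefore looks for $F_1$ supported on the orbits containing $x_0 x_2 x_4$ via the ansatz
\[
F_1 = \lambda \cdot x_0 x_2 x_4 + \sum_{i=0}^{2} \sigma^i\bigl(\mu_1 x_0^3 + \mu_2 x_1 x_2 x_3 + \mu_3 x_0 x_3^2\bigr),
\]
and analogously for $F_2$ on the complementary orbit of $x_1 x_3 x_5$. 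By the $\sigma$-equivariance of the condition $\partial F_1/\partial x_i \in H^0(\mathcal{I}_{C_6}(2))$, it is enough to impose it for a single representative from each $\sigma$-orbit of indices, reducing the problem to a finite linear system in $\lambda, \mu_1, \mu_2, \mu_3$.

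One then solves this system by expanding the chosen partial derivatives and matching coefficients against the nine basis quadrics, obtaining equations in $\lambda, \mu_j$ and the parameters $\alpha, \beta, \gamma$. Using the relations $\gamma = \alpha\beta$ and $\alpha\beta(\alpha + \beta) = -2$ from the preceding lemma to simplify, one recovers the explicit values $\mu_1 = -2$, $\mu_2 = 2(\beta - \alpha)$, $\mu_3 = \alpha\beta(\beta - \alpha)$ and $\lambda = 2(\alpha^2\beta^2 - \alpha - \beta)$ displayed in the theorem.

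The main obstacle is purely computational: matching a six-variable quadric against nine basis elements and checking global consistency across all partial derivatives. Linear independence of $F_1$ and $F_2$ is immediate, since the monomial $x_0 x_2 x_4$ appears in $F_1$ but not in $F_2$. Because the degree-three component of $I(\Sec(C_6))$ has dimension at most two, the pair $\{F_1, F_2\}$ is a basis of it, and by Lemma \ref{lemds} these cubics generate $I(\Sec(C_6))$.
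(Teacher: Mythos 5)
Your proposal follows essentially the same route as the paper: both rest on the two quoted lemmas (the ideal is generated by cubics, and the space of such cubics has dimension at most $\frac{6\cdot 2\cdot 1}{6}=2$), verify membership via the criterion $\partial F/\partial x_i \in I(C_6)$ by matching against the nine quadrics of Theorem \ref{propcz} with the $\sigma$-symmetry cutting the check down to a couple of partial derivatives, and conclude from linear independence. The paper simply verifies the displayed $F_1, F_2$ (found by computer, as its remark notes) by exhibiting $\partial F_1/\partial x_0$ and $\partial F_1/\partial x_1$ as explicit combinations of $Q_0, Q_0', Q_2''$, which is the same computation your ansatz-and-solve step would carry out.
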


\begin{proof}

Note that all monomials in $F_1$ and $F_2$ are distinct and $F_1 = \sigma(F_2)$ and $F_2 = \sigma(F_1)$. Therefore, they are linearly independent and it is enough to check that 
$\frac{\partial F_1}{\partial x_0},\frac{\partial F_1}{\partial x_1} \in I(C_6)$. One can easily check that $$\frac{\partial F_1}{\partial x_0} = \left(-3 + \frac{\alpha\beta(\beta - \alpha)}{2}\right)\cdot Q_0 + \left(-3 - \frac{\alpha\beta(\beta - \alpha)}{2}\right)\cdot Q_0' \in I(C_6)$$ and $$\frac{\partial F_1}{\partial x_1} = 2(\beta - \alpha)\cdot Q_2'' \in I(C_6),$$ where $Q_0, Q_0'$ and $Q_2''$ are quadrics defined in Theorem \ref{propcz}.    

\end{proof}

\begin{remark}

We used a computer algebra system \textit{Singular} (\cite{Sing}) to compute $F_1$ and $F_2$. 

\end{remark}

\section{Geometry of an elliptic normal curve $C_6 \subset \mathbb{P}^5$}

In this section we will prove the main results of the paper. 

We will denote the image of $C_6$ under the projection from point $P \notin \Sec(C_6)$ by $C_p$. The curve $C_p$ is an elliptic curve of degree 6 in $\mathbb{P}^4$.

\begin{proposition}
\label{prop3}

The curve $C_p$ is $k$-normal for all $k \geq 2$ and
$$h^0(\mathcal{I}_{C_p}(2)) = 3,$$ $$h^0(\mathcal{I}_{C_p}(3)) = 17.$$

\end{proposition}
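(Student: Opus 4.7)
The plan is to reduce the proposition to the $2$-normality of $C_p$ and then invoke Castelnuovo--Mumford regularity to obtain $k$-normality for all $k\ge 2$ together with the two claimed dimensions. Since $C_p\subset\mathbb{P}^4$ is a smooth elliptic curve of degree $6$, Riemann--Roch yields $h^0(\mathcal{O}_{C_p}(k))=6k$ and $h^1(\mathcal{O}_{C_p}(k))=0$ for every $k\ge 1$. Feeding this into the long exact sequence of $0\to\mathcal{I}_{C_p}(k)\to\mathcal{O}_{\mathbb{P}^4}(k)\to\mathcal{O}_{C_p}(k)\to 0$, one checks that $H^i(\mathcal{I}_{C_p}(3-i))=0$ holds automatically for $i\ge 2$, so $\mathcal{I}_{C_p}$ is $3$-regular precisely when $H^1(\mathcal{I}_{C_p}(2))=0$, i.e.\ when $C_p$ is $2$-normal. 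Castelnuovo--Mumford then propagates the vanishing $H^1(\mathcal{I}_{C_p}(k))=0$ to all $k\ge 2$, giving $k$-normality and $h^0(\mathcal{I}_{C_p}(k))=\binom{k+4}{4}-6k$; the values $k=2,3$ produce $3$ and $17$ as required.

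Everything now rests on $2$-normality. Work in the Heisenberg-invariant coordinates $(x_0:\dots:x_5)$ of Theorem \ref{propcz}, and for $P\in\mathbb{P}^5$ let $U_P\subset H^0(\mathcal{O}_{\mathbb{P}^5}(2))$ be the $15$-dimensional subspace $\mathrm{Sym}^2 V_P$, where $V_P\subset H^0(\mathcal{O}_{\mathbb{P}^5}(1))$ is the hyperplane of linear forms vanishing at $P$. For $P\notin\Sec(C_6)$ the projection $\pi_P$ is an isomorphism $C_6\to C_p$ and identifies $\pi_P^* H^0(\mathcal{O}_{\mathbb{P}^4}(2))$ with $U_P$, so
$$h^0(\mathcal{I}_{C_p}(2))=\dim\bigl(H^0(\mathcal{I}_{C_6}(2))\cap U_P\bigr)\ \ge\ 15+9-21=3.$$
Equality is equivalent to surjectivity of the projection $H^0(\mathcal{I}_{C_6}(2))\to H^0(\mathcal{O}_{\mathbb{P}^5}(2))/U_P$ onto the $6$-dimensional quotient, and since the map $P\mapsto\dim(H^0(\mathcal{I}_{C_6}(2))\cap U_P)$ is upper-semicontinuous on all of $\mathbb{P}^5$, it suffices to exhibit a single point $P^*$ at which this dimension equals $3$.

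Taking $P^*=(1:0:0:0:0:0)$, the subspace $U_{P^*}$ consists of quadratic forms free of $x_0$, and the quotient $H^0(\mathcal{O}_{\mathbb{P}^5}(2))/U_{P^*}$ is spanned by $\{x_0 x_i\}_{i=0}^5$. Reading off the $x_0$-parts of the basis from Theorem \ref{propcz}: $Q_0\mapsto x_0^2$, $Q_0''\mapsto x_0x_1$, $Q_1\mapsto \alpha\, x_0x_2$, $Q_1''\mapsto \gamma\, x_0x_3$, $Q_2\mapsto \alpha\, x_0x_4$, $Q_2''\mapsto x_0x_5$; since $\alpha,\gamma\ne 0$ these six images are linearly independent, the projection is surjective at $P^*$, and by upper-semicontinuity the intersection has dimension exactly $3$ on a dense open subset of $\mathbb{P}^5$. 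Intersecting with $\mathbb{P}^5\setminus\Sec(C_6)$ gives $h^0(\mathcal{I}_{C_p}(2))=3$ for general $P\notin\Sec(C_6)$, which establishes $2$-normality.

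The main obstacle I anticipate is precisely this semicontinuity bridge: the favorable coordinate point $P^*$ happens to lie on $C_6$ itself (it is the image of $\frac{\omega_1}{2}+\frac{\omega_2}{12}$ in Hulek's embedding), so it is \emph{not} a legitimate projection center, and one must verify that the function $P\mapsto\dim(H^0(\mathcal{I}_{C_6}(2))\cap U_P)$ extends in the expected upper-semicontinuous way from $\mathbb{P}^5\setminus\Sec(C_6)$ to all of $\mathbb{P}^5$ before transferring the computation at $P^*$ to generic $P$. Once this is in place, the six-by-nine rank check supplied by the basis of Theorem \ref{propcz} is immediate.
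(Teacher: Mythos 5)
Your argument is essentially correct, but it takes a genuinely different route from the paper and, as written, proves a slightly weaker statement. The paper does not reduce to $2$-normality at all: it establishes $k$-normality for every $k\ge 2$ and for \emph{every} centre $P\notin\Sec(C_6)$ in one stroke, by comparing the two restriction sequences through the pullback maps and showing that $\varphi\circ\pi_2^*\colon H^0(\mathcal{O}_{\mathbb{P}^4}(k))\to H^0(\mathcal{O}_{C_6}(k))$ is surjective via a pure dimension count ($\binom{4+k}{4}+\binom{5+k}{5}-6k+1>\binom{5+k}{5}$ for $k\ge 2$), using neither the explicit quadrics of Theorem \ref{propcz} nor regularity; the two dimensions then fall out of the exact sequence. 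You instead (a) reduce $k$-normality for all $k\ge 2$ to $2$-normality via Castelnuovo--Mumford regularity, essentially re-deriving Lemma \ref{tra} in the case $m=3$ (the paper only brings regularity in later, for Proposition \ref{k}), and (b) verify $2$-normality as the linear-algebra statement $\dim\bigl(H^0(\mathcal{I}_{C_6}(2))\cap\Symm^2 V_P\bigr)=3$, checked by an explicit rank computation of the $9$-dimensional space of quadrics against the $6$-dimensional quotient at the special point $P^*=(1:0:\dots:0)$, and then spread out by semicontinuity. What your route buys is an effective, checkable criterion tied to the explicit basis; what it costs is uniformity: semicontinuity only gives the conclusion for $P$ in an unspecified dense open subset, whereas the proposition is stated (before any genericity conditions are imposed) for every $P\notin\Sec(C_6)$, and the paper's count delivers exactly that. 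Since the later theorems assume $P$ general anyway, your version would still suffice there after adding this open condition to the hypotheses.

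Two smaller points. Your closing worry is unfounded, and the claim it rests on is false: $P^*=(1:0:\dots:0)$ does not lie on $C_6$ (for instance $Q_0(P^*)=1\neq 0$ while $Q_0$ vanishes on $C_6$; the point $\varphi\bigl(\tfrac{\omega_1}{2}+\tfrac{\omega_2}{12}\bigr)$ has $x_0=0$ and all other coordinates nonzero, so it is not a coordinate vertex). More importantly, no ``extension'' of semicontinuity is needed: $P\mapsto\dim\bigl(H^0(\mathcal{I}_{C_6}(2))\cap\Symm^2 V_P\bigr)$ is defined and upper semicontinuous on all of $\mathbb{P}^5$, since the spaces $\Symm^2 V_P$ form a subbundle of the trivial bundle with fibre $H^0(\mathcal{O}_{\mathbb{P}^5}(2))$ and the function is the corank of a map varying regularly with $P$; the geometric identification of this number with $h^0(\mathcal{I}_{C_p}(2))$ is invoked only at the general points $P\notin\Sec(C_6)$ where the conclusion is drawn, so that step of your argument stands as written.
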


\begin{proof}

Let $\pi$ be a projection map from $P$. Then the following diagram commutes and the rows are short exact sequences: 

\[ \begin{tikzcd}
0 \arrow{r} & \mathcal{I}_{C_6}(k) \arrow{r} & \mathcal{O}_{\mathbb{P}^5}(k) \arrow{r} & \mathcal{O}_{C_6}(k) \arrow{r} & 0 \\%
0 \arrow{r} & \mathcal{I}_{C_p}(k) \arrow{r} \arrow[swap]{u}{\pi^*} & \mathcal{O}_{\mathbb{P}^4}(k) \arrow{r} \arrow[swap]{u}{\pi^*} & \mathcal{O}_{C_p}(k) \arrow{r} \arrow[swap]{u}{\pi^*} & 0
\end{tikzcd}
\]
Since $C_6$ is projectively normal we obtain a commutative diagram with rows being exact
\[ \begin{tikzcd}
0 \arrow{r} & H^0(\mathcal{I}_{C_6}(k)) \arrow{r} & H^0(\mathcal{O}_{\mathbb{P}^5}(k)) \arrow{r}{\varphi} & H^0(\mathcal{O}_{C_6}(k)) \arrow{r} & 0 \\%
0 \arrow{r} & H^0(\mathcal{I}_{C_p}(k)) \arrow{r} \arrow[swap]{u}{\pi_1^*} & H^0(\mathcal{O}_{\mathbb{P}^4}(k)) \arrow{r}{\psi} \arrow[swap]{u}{\pi_2^*} & H^0(\mathcal{O}_{C_p}(k)) \arrow[swap]{u}{\pi_3^*} 
\end{tikzcd}
\]

It is enough to show that $\psi$ is surjective. Note that $\pi_i^*$'s (maps induced by $\pi^*$) are injective. From $h^0(\mathcal{O}_{C_6}(k)) = h^0(\mathcal{O}_{C_p}(k)) = 6k$ it follows that $\pi_3^*$ is an isomorphism. Thus, it is enough to show that $\varphi \circ \pi_2^*$ is surjective. Take any $\langle v \rangle \subset H^0(\mathcal{O}_{C_6}(k))$. It has codimension $6k - 1$ in $H^0(\mathcal{O}_{C_6}(k))$ and, since $\varphi$ is a surjection, codimension of $\varphi^{-1}(\langle v \rangle)$ in $H^0(\mathcal{O}_{\mathbb{P}^5}(k))$ is also equal to $6k - 1$ hence $$\dim \varphi^{-1}(\langle v \rangle) = \binom{5 + k}{5} - (6k - 1).$$ 
But then $$\dim \pi_2^*(H^0(\mathcal{O}_{\mathbb{P}^4}(k))) + \dim \varphi^{-1}(\langle v \rangle) = \binom{4 + k}{4} + \binom{5 + k}{5} - 6k + 1,$$ which is greater than $h^0(\mathcal{O}_{\mathbb{P}^5}(k)) = \binom{5 + k}{5}$ for all $k \geq 2$ so $$\pi_2^*(H^0(\mathcal{O}_{\mathbb{P}^4}(k))) \cap \varphi^{-1}(\langle v \rangle) \neq \{0\}.$$ 
It follows that $\varphi \circ \pi_2^*$ is indeed surjective.

From the exact sequence 
\[ \begin{tikzcd}
0 \arrow{r} & \mathcal{I}_{C_{p}}(2) \arrow{r} & \mathcal{O}_{\mathbb{P}^4}(2) \arrow{r} & \mathcal{O}_{C_{p}}(2) \arrow{r} & 0 
\end{tikzcd}
\]
we obtain $$h^0(\mathcal{I}_{C_p}(2)) = h^0(\mathcal{O}_{\mathbb{P}^4}(2)) - h^0(\mathcal{O}_{C_p}(2)) = 15 - 12 = 3$$ and similarly $$h^0(\mathcal{I}_{C_p}(3)) = h^0(\mathcal{O}_{\mathbb{P}^4}(3)) - h^0(\mathcal{O}_{C_p}(3)) = 35 - 18 = 17.$$

\end{proof}

We can go further and consider an image of $C_p$ under a projection from a point $Q \in \mathbb{P}^4 \setminus \Sec^2(C_p)$. We will denote this image by $C_{pq}$. The proof of the following result is analogous. 

\begin{proposition}
\label{prprpr}

The curve $C_{pq}$ is $k$-normal for all $k \geq 3$ and $$h^0(\mathcal{I}_{C_{pq}}(3)) = 2,$$ $$h^0(\mathcal{I}_{C_{pq}}(4)) = 11.$$

\end{proposition}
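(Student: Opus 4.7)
The plan is to mimic verbatim the argument of Proposition \ref{prop3}, replacing the projective normality of $C_6$ by the $k$-normality of $C_p$ (for $k \geq 2$) which was just established. Let $\pi : \mathbb{P}^4 \dashrightarrow \mathbb{P}^3$ denote the projection from $Q$. I would twist the ideal sequences of $C_p \subset \mathbb{P}^4$ and $C_{pq} \subset \mathbb{P}^3$ by $\mathcal{O}(k)$, take global sections, and stack them into a commutative diagram with vertical maps induced by $\pi^*$; by Proposition \ref{prop3} the top row is short exact for every $k \geq 2$, which is the substitute for the appeal to projective normality made in the previous proof.

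The goal is then to show that the bottom restriction map $\psi : H^0(\mathcal{O}_{\mathbb{P}^3}(k)) \to H^0(\mathcal{O}_{C_{pq}}(k))$ is surjective for $k \geq 3$. The pullback $\pi_3^* : H^0(\mathcal{O}_{C_{pq}}(k)) \to H^0(\mathcal{O}_{C_p}(k))$ is injective, and since both sides have dimension $6k$ (both $C_p$ and $C_{pq}$ are elliptic of degree $6$), it is an isomorphism. A diagram chase therefore reduces the problem to the surjectivity of $\varphi \circ \pi_2^*$, where $\varphi$ is restriction to $C_p$. For any line $\langle v \rangle \subset H^0(\mathcal{O}_{C_p}(k))$, I would compute codimensions and check that $\dim \pi_2^*(H^0(\mathcal{O}_{\mathbb{P}^3}(k))) + \dim \varphi^{-1}(\langle v \rangle)$ exceeds $h^0(\mathcal{O}_{\mathbb{P}^4}(k))$, which forces the two subspaces to meet nontrivially and hence yields $\langle v \rangle$ in the image.

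The only numerically nontrivial point is the inequality $\binom{k+3}{3} \geq 6k$, which holds for $k \geq 3$ but fails at $k = 2$ (where it gives $10 < 12$). This is precisely where the threshold shifts from $k \geq 2$ in Proposition \ref{prop3} to $k \geq 3$ here. Once $k$-normality is in hand for $k = 3, 4$, the two dimensions drop out of the exact sequence $0 \to \mathcal{I}_{C_{pq}}(k) \to \mathcal{O}_{\mathbb{P}^3}(k) \to \mathcal{O}_{C_{pq}}(k) \to 0$, yielding $h^0(\mathcal{I}_{C_{pq}}(3)) = 20 - 18 = 2$ and $h^0(\mathcal{I}_{C_{pq}}(4)) = 35 - 24 = 11$. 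I do not foresee any genuine obstacle: the main thing to handle with care is the codimension bookkeeping in the ambient spaces of global sections, where the genericity hypothesis $Q \notin \Sec^2(C_p)$ is used only implicitly to ensure that $C_{pq}$ is again a smooth elliptic curve of degree $6$, so that the Riemann-Roch computation $h^0(\mathcal{O}_{C_{pq}}(k)) = 6k$ remains valid.
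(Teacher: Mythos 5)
Your proposal is correct and is essentially the paper's own argument: the paper simply declares the proof of this proposition ``analogous'' to that of Proposition \ref{prop3}, and you carry out exactly that analogy, with the top row's exactness now supplied by the $k$-normality of $C_p$, the correct numerical threshold $\binom{k+3}{3} \geq 6k$ (valid for $k \geq 3$, failing at $k = 2$, which explains the shift of the bound), and the dimension counts $20 - 18 = 2$ and $35 - 24 = 11$.
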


Our next aim is to show that ideals of $C_p$ and $C_{pq}$ are generated in degree 3 and 4 respectively. We will use the theory of regularity of coherent sheaves to do this. 

\begin{definition}

Let $\mathcal{F}$ be a coherent sheaf on $\mathbb{P}^n$. We say that $\mathcal{F}$ is $m$-regular if $$H^i(\mathbb{P}^n, \mathcal{F}(m - i)) = 0 \:\:\:\: \mbox {for all } i > 0.$$ 

\end{definition}

\begin{proposition}[\cite{Mum}, Lecture 14]
\label{mum}
Let $\mathcal{F}$ be an $m$-regular coherent sheaf on $\mathbb{P}^n$. Then $H^0(\mathbb{P}^n, \mathcal{F}(k))$ is spanned by $$H^0(\mathbb{P}^n, \mathcal{F}(k - 1)) \otimes H^0(\mathbb{P}^n, \mathcal{O}_{\mathbb{P}^n}(1)) \:\: \mbox{ if } k > m.$$

\end{proposition}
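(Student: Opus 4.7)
The plan is to prove the proposition by induction on the dimension $n$ of the ambient projective space, following Mumford's classical argument. The case $n = 0$ is vacuous. For the inductive step, I would choose a hyperplane $H \cong \mathbb{P}^{n-1}$ whose defining form $h \in H^0(\mathcal{O}_{\mathbb{P}^n}(1))$ does not vanish on any associated subvariety of $\mathcal{F}$; such $H$ exists since there are only finitely many associated primes. For such an $H$, multiplication by $h$ is injective on $\mathcal{F}$, yielding the short exact sequence $0 \to \mathcal{F}(-1) \to \mathcal{F} \to \mathcal{F}_H \to 0$, where $\mathcal{F}_H$ denotes the restriction of $\mathcal{F}$ to $H$. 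Twisting by $\mathcal{O}(m-i)$ and examining the long exact cohomology sequence, the outer terms $H^i(\mathcal{F}(m-i))$ and $H^{i+1}(\mathcal{F}(m-i-1))$ vanish by the $m$-regularity of $\mathcal{F}$, so $H^i(H, \mathcal{F}_H(m-i)) = 0$ for $i \geq 1$. Thus $\mathcal{F}_H$ is $m$-regular on $\mathbb{P}^{n-1}$, and the inductive hypothesis applies.

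I would then establish two intermediate claims by a joint induction on $n$. First, $m$-regularity propagates upward: from the twisted sequence $0 \to \mathcal{F}(m-i) \to \mathcal{F}(m+1-i) \to \mathcal{F}_H(m+1-i) \to 0$, the group $H^i(\mathcal{F}(m+1-i))$ sits between $H^i(\mathcal{F}(m-i)) = 0$ (by $m$-regularity) and $H^i(\mathcal{F}_H(m+1-i)) = 0$ (by induction on $n$ applied to the $m$-regular sheaf $\mathcal{F}_H$). Hence $\mathcal{F}$ is $(m+1)$-regular, and iterating gives $k$-regularity for all $k \geq m$. Second, the multiplication $H^0(\mathcal{F}(k)) \otimes H^0(\mathcal{O}_{\mathbb{P}^n}(1)) \to H^0(\mathcal{F}(k+1))$ is surjective for all $k \geq m$: the cohomology of $0 \to \mathcal{F}(k) \xrightarrow{\cdot h} \mathcal{F}(k+1) \to \mathcal{F}_H(k+1) \to 0$, combined with $H^1(\mathcal{F}(k)) = 0$ from the $k$-regularity just established, yields the short exact sequence
\[
0 \to H^0(\mathcal{F}(k)) \xrightarrow{\cdot h} H^0(\mathcal{F}(k+1)) \to H^0(\mathcal{F}_H(k+1)) \to 0.
\]
The image of the multiplication map contains the kernel of restriction (namely $H^0(\mathcal{F}(k)) \cdot h$) and, using the analogous surjectivity for $\mathcal{F}_H$ supplied by the inductive hypothesis together with the restriction of sections of $\mathcal{O}_{\mathbb{P}^n}(1)$ to $H$, also surjects onto the quotient $H^0(\mathcal{F}_H(k+1))$, hence equals the whole of $H^0(\mathcal{F}(k+1))$. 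Starting from $k = m$ and iterating gives the statement for every $k > m$.

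The main obstacle is the interlocked structure of the induction: the propagation claim and the surjectivity claim must be carried through together, so that the hypothesis on $\mathcal{F}_H \subset \mathbb{P}^{n-1}$ supplies simultaneously the vanishing needed for the upper row and the surjectivity after restriction. The remaining content is diagrammatic bookkeeping with the long exact sequences attached to the hyperplane restriction, together with the initial genericity choice that guarantees $h$ acts as a non-zero-divisor on $\mathcal{F}$.
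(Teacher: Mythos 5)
This proposition is quoted in the paper directly from Mumford's \emph{Lectures on curves on an algebraic surface} (Lecture 14) without any proof, and your argument is exactly the classical one from that source: restriction to a generic hyperplane avoiding the associated points, plus a joint induction on $n$ establishing both the propagation of regularity and the surjectivity of multiplication by linear forms, so it is correct and matches the intended proof. The only cosmetic slip is that the vanishing $H^1(\mathcal{F}(k))=0$ you use in the surjectivity step follows from $(k+1)$-regularity rather than $k$-regularity, but your propagation claim supplies that anyway, so nothing is missing.
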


This proposition applied to $\mathcal{I}_X$ of some projective variety $X \subset \mathbb{P}^n$ says that if $\mathcal{I}_X$ is $m$-regular, then $I(X)$ is generated in degree $m$. 

\begin{lemma}
\label{tra}
Let $C \subset \mathbb{P}^n$ be a curve and $m > 0$ be a positive integer. Then $\mathcal{I}_C$ is $m$-regular if and only if $C$ is $(m - 1)$-normal and $H^1(\mathcal{O}_C(m - 2)) = 0$. 

\end{lemma}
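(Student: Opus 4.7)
The plan is to read off the regularity condition from the long exact cohomology sequences of the ideal sheaf short exact sequence
$$0 \to \mathcal{I}_C(k) \to \mathcal{O}_{\mathbb{P}^n}(k) \to \mathcal{O}_C(k) \to 0$$
taken with $k = m-i$ for $i = 1, 2, \ldots, n$, and then sort out which cohomological vanishing condition ties to which hypothesis. The two background facts I will use without comment are the standard vanishing $H^i(\mathcal{O}_{\mathbb{P}^n}(k)) = 0$ for $0 < i < n$ (all $k$) together with $H^n(\mathcal{O}_{\mathbb{P}^n}(k)) = 0$ for $k \geq -n$, and the fact that $C$ is one-dimensional, so $H^j(\mathcal{O}_C(k)) = 0$ for $j \geq 2$.

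The case $i = 1$ is handled directly: the piece
$$H^0(\mathcal{O}_{\mathbb{P}^n}(m-1)) \to H^0(\mathcal{O}_C(m-1)) \to H^1(\mathcal{I}_C(m-1)) \to H^1(\mathcal{O}_{\mathbb{P}^n}(m-1)) = 0$$
exhibits $H^1(\mathcal{I}_C(m-1))$ as the cokernel of the restriction map, and its vanishing is by definition $(m-1)$-normality. For $i = 2$ the relevant piece is
$$0 = H^1(\mathcal{O}_{\mathbb{P}^n}(m-2)) \to H^1(\mathcal{O}_C(m-2)) \to H^2(\mathcal{I}_C(m-2)) \to H^2(\mathcal{O}_{\mathbb{P}^n}(m-2)).$$
For $n \geq 3$ the right-hand term vanishes automatically; for $n = 2$ it also vanishes because $m > 0$ forces $m - 2 \geq -1 > -3$. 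Either way $H^2(\mathcal{I}_C(m-2)) \cong H^1(\mathcal{O}_C(m-2))$, which matches the second hypothesis exactly.

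The remaining cases $i \geq 3$ must be shown to be automatic. Here the relevant segment is
$$H^{i-1}(\mathcal{O}_C(m-i)) \to H^i(\mathcal{I}_C(m-i)) \to H^i(\mathcal{O}_{\mathbb{P}^n}(m-i)).$$
The left term vanishes since $i-1 \geq 2$ and $C$ is a curve. The right term vanishes: if $i < n$ this is the standard vanishing on projective space, while if $i = n$ it uses $m - n \geq 1 - n \geq -n$, which is precisely where the hypothesis $m > 0$ is needed. Hence $H^i(\mathcal{I}_C(m-i)) = 0$ for free, completing the equivalence.

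I do not expect any genuine obstacle; the argument is a straightforward bookkeeping exercise in the long exact sequence. The only point that requires any care is the boundary case $i = n$ in the last paragraph, where the hypothesis $m > 0$ must be invoked to kill $H^n(\mathcal{O}_{\mathbb{P}^n}(m-n))$.
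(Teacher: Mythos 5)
Your proof is correct and follows essentially the same route as the paper: both chase the long exact cohomology sequence of $0 \to \mathcal{I}_C(k) \to \mathcal{O}_{\mathbb{P}^n}(k) \to \mathcal{O}_C(k) \to 0$, using the standard vanishing on $\mathbb{P}^n$ (including the boundary case $H^n(\mathcal{O}_{\mathbb{P}^n}(m-n))=0$ from $m>0$) and $\dim C = 1$ to kill the higher terms, and identify the $i=1$ and $i=2$ vanishings with $(m-1)$-normality and $H^1(\mathcal{O}_C(m-2))=0$ respectively. Your organization by cohomological index, giving both implications at once, is a mild streamlining of the paper's two-directional argument, but the content is the same.
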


\begin{proof}

Recall that for any $k$ we have an exact sequence 

\[ \begin{tikzcd}
0 \arrow{r} & \mathcal{I}_{C}(k) \arrow{r} & \mathcal{O}_{\mathbb{P}^n}(k) \arrow{r} & \mathcal{O}_{C}(k) \arrow{r} & 0. 
\end{tikzcd}
\]
If $\mathcal{I}_C$ is $m$-regular, then $H^1(\mathcal{I}_C(m-1)) = 0$, hence we have a short exact sequence 
\[ \begin{tikzcd}
H^0(\mathcal{O}_{\mathbb{P}^n}(m - 1)) \arrow{r} & H^0(\mathcal{O}_{C}(m - 1)) \arrow{r} & 0. 
\end{tikzcd}
\]
Therefore, $C$ is $(m - 1)$-normal. From the following fragment of the long exact sequence of cohomologies 

\[ \begin{tikzcd}
H^1(\mathcal{O}_{\mathbb{P}^n}(m - 2)) \arrow{r} & H^1(\mathcal{O}_{C}(m - 2)) \arrow{r} & H^2(\mathcal{I}_{C}(m - 2))  
\end{tikzcd}
\]
and the fact that both $H^1(\mathcal{O}_{\mathbb{P}^n}(m - 2))$ (see \cite[III proposition 2.1.12]{Grot}) and $H^2(\mathcal{I}_{C}(m - 2))$ (since $\mathcal{I}_C$ is $m$-regular) are equal to zero, we get $H^1(\mathcal{O}_{C}(m - 2)) = 0$. 

For the proof of the opposite direction, consider the following fragment of the long exact sequence of cohomologies for $q > 1$

\[ \begin{tikzcd}
H^{q - 1}(\mathcal{O}_{C}(m - q)) \arrow{r} & H^q(\mathcal{I}_{C}(m - q)) \arrow{r} & H^q(\mathcal{O}_{\mathbb{P}^n}(m - q))  
\end{tikzcd}
\]
Since $C$ is a one-dimensional projective variety, we have $H^{q - 1}(\mathcal{O}_{C}(m - q)) = 0$ for $q > 2$ and $H^{1}(\mathcal{O}_{C}(m - 2)) = 0$ from the assumptions. Also, by \cite[III proposition 2.1.12]{Grot}, $H^q(\mathcal{O}_{\mathbb{P}^n}(m - q)) = 0$ for $q \neq n$. Note that $H^n(\mathcal{O}_{\mathbb{P}^n}(m - n))$ is also equal to 0 because $m - n \geq -n$. Thus, we always have zeros on the left and on the right in the exact sequence. It follows that $H^q(\mathcal{I}_{C}(m - q)) = 0$.

For $q = 1$ we have an exact sequence 

\[ \begin{tikzcd}
H^0(\mathcal{O}_{\mathbb{P}^n}(m - 1)) \arrow{r} & H^{0}(\mathcal{O}_{C}(m - 1)) \arrow{r} & H^1(\mathcal{I}_{C}(m - 1)) \arrow{r} & 0 
\end{tikzcd}
\]
Since $C$ is $(m - 1)$-normal, $H^0(\mathcal{O}_{\mathbb{P}^n}(m - 1)) \to H^{0}(\mathcal{O}_{C}(m - 1))$
is surjective, hence $H^1(\mathcal{I}_{C}(m - 1)) = 0$. 
\end{proof}

\begin{proposition} 
\label{k}
The following hold:

\begin{enumerate}
    \item The ideal of the curve $C_p$ is generated in degree 3.
    \item The ideal of the curve $C_{pq}$ is generated in degree 4.
\end{enumerate}

\end{proposition}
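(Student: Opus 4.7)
The plan is to apply the Castelnuovo--Mumford regularity machinery provided by Lemma \ref{tra} and Proposition \ref{mum}. Specifically, I would show that $\mathcal{I}_{C_p}$ is $3$-regular and $\mathcal{I}_{C_{pq}}$ is $4$-regular; Proposition \ref{mum} then immediately yields that $I(C_p)$ is generated in degrees $\leq 3$ and $I(C_{pq})$ in degrees $\leq 4$, which is the desired conclusion.

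To show $\mathcal{I}_{C_p}$ is $3$-regular, by Lemma \ref{tra} it suffices to verify two things: that $C_p$ is $2$-normal, and that $H^1(\mathcal{O}_{C_p}(1)) = 0$. The first is the content of Proposition \ref{prop3}. For the second, I use that $C_p$ is an elliptic curve and $\mathcal{O}_{C_p}(1)$ has degree $6 > 0$; since the canonical bundle on an elliptic curve is trivial, Serre duality gives $H^1(\mathcal{O}_{C_p}(1)) \cong H^0(\mathcal{O}_{C_p}(-1))^*$, and the latter vanishes because $\mathcal{O}_{C_p}(-1)$ has negative degree. The case of $C_{pq}$ is entirely analogous: Lemma \ref{tra} reduces $4$-regularity of $\mathcal{I}_{C_{pq}}$ to $3$-normality (Proposition \ref{prprpr}) together with $H^1(\mathcal{O}_{C_{pq}}(2)) = 0$, and the latter again follows since $\mathcal{O}_{C_{pq}}(2)$ has degree $12 > 0$ on an elliptic curve.

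Having established the regularity, Proposition \ref{mum} tells us that for every $k > 3$ (resp.\ $k > 4$) the multiplication map
\[
H^0(\mathcal{I}_{C_p}(k-1)) \otimes H^0(\mathcal{O}_{\mathbb{P}^4}(1)) \longrightarrow H^0(\mathcal{I}_{C_p}(k))
\]
is surjective (resp.\ the corresponding map for $C_{pq}$ in $\mathbb{P}^3$). An easy induction then shows that $I(C_p) = \bigoplus_{k \geq 0} H^0(\mathcal{I}_{C_p}(k))$ is generated as an ideal by its graded pieces of degree at most $3$, and similarly $I(C_{pq})$ is generated by elements of degree at most $4$. This gives both statements of the proposition.

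I do not expect any serious obstacle here, since the proof is essentially a direct invocation of the two preceding lemmas; the only content is the vanishing of $H^1$ of a positive-degree line bundle on an elliptic curve, which is standard. The real work of the paper lies in the subsequent refinement, where the precise \emph{number} of generators in each degree must be pinned down using the quadric geometry of $C_p$ and $C_{pq}$.
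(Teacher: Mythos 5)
Your proposal is correct and follows essentially the same route as the paper: both verify the hypotheses of Lemma \ref{tra} using the $2$-normality (resp.\ $3$-normality) from Propositions \ref{prop3} and \ref{prprpr} together with the Serre-duality vanishing $H^1(\mathcal{O}_C(t))=0$ for $t>0$ on an elliptic curve, and then conclude via Proposition \ref{mum}. There is nothing missing; the paper's proof is exactly this argument.
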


\begin{proof}

Note that for any elliptic curve $C \subset \mathbb{P}^n$ we have $H^1(\mathcal{O}_C(t)) = 0$ for any $t > 0$. Indeed, by Serre duality $H^1(\mathcal{O}_C(t)) = H^0(\mathcal{O}_C(-t)) = 0$.

\begin{enumerate}

    \item Since $C_p$ is $2$-normal and $H^1(\mathcal{O}_{C_p}(1)) = 0$, we get that $\mathcal{I}_{C_p}$ is $3$-regular by Lemma \ref{tra}.
    \item Analogously, $C_{pq}$ is $3$-normal and $H^1(\mathcal{O}_{C_{pq}}(2)) = 0$, hence $\mathcal{I}_{C_{pq}}$ is $4$-regular.
\end{enumerate}

Therefore, by Proposition \ref{mum}, ideals of $C_p$ and $C_{pq}$ are generated in degree 3 and 4 respectively. 

\end{proof}

Our next aim is to show that if $P$ is a general point of $\mathbb{P}^5$ then the ideal $I(C_p)$ of the curve $C_p$ is generated by three polynomials of degree 2 and two polynomials of degree 3. 

It follows from (\cite[Lemma 1 and main Theorem]{Lan}) that $\dim \Sec^3(C_6) = 5$ hence $\Sec^3(C_6) = \mathbb{P}^5$.

\begin{definition}
\label{defka}
Let $X \subset \mathbb{P}^r$ be a closed, irreducible subvariety of dimension $n$, not lying on a hyperplane and such that $\dim \Sec^k(X) = k (n + 1) - 1$. Then through a general point of $\Sec^k(X)$ there passes a finite number of $k$-secant $(k-1)$-planes. In this case, let $\sec_k(X)$ denote the number of such $(k-1)$-planes.

\end{definition}

For the rest of this chapter we impose two conditions on $P$:

\begin{enumerate}
    \item The point $P$ is a general point of $\Sec^3(C_6) = \mathbb{P}^5$ in the sense of Definition \ref{defka}, \\i. e. the number of $3$-secant $2$-planes passing through $P$ is exactly $\sec_k(C_6)$.
    \item The point $P$ is not a vertex of a quadric hypersurface of rank 3 in $\mathbb{P}^5$ containing $C_6$.
\end{enumerate}

\begin{remark}

Although the set of points $P \in \mathbb{P}^5$, satisfying the above conditions contains an open subset of $\mathbb{P}^5$ (we will see the proof of this fact in Lemma \ref{lemmaa}), it is important to mention that the conditions can be weakened:  

The first condition can be weakened to: "the number of $3$-secant $2$-planes passing through $P$ is greater than 1". 

The second condition is essential for the proof of Lemma \ref{lemmaa} and, perhaps, can also be weakened. In this case the proof of Lemma \ref{lemmaa} must be modified, and we leave this direction for further research. 

\end{remark}

Since $C_6$ is not a rational normal curve we have $\sec_3(C_6) > 1$ (see \cite[Theorem 3.4]{CJ}). From our assumptions there exist at least two planes spanned by triples of points on $C_6$ containing $P$. Note that such triples are disjoint. Indeed, if there are $m \in \{1,2\}$ common points then the intersection of planes contains an $m$-plane spanned by those $m$ points and $P$. It follows that those $6 - m$ points span a linear subspace of dimension $4 - m$ which contradicts Lemma \ref{lemka}. We will denote these points by $(R_1,R_2,R_3)$ and $(T_1,T_2,T_3)$. Let $S_1 = \spann(R_1, R_2, R_3)$, $S_2 = \spann(T_1, T_2, T_3)$ and $H = \spann(R_1, R_2, R_3, T_1, T_2, T_3)$. Since $P \in S_1 \cap S_2$ and span$(R_1, R_2, R_3, T_1, T_2) \simeq \mathbb{P}^4$, we have $H = $ span$(R_1, R_2, R_3, T_1, T_2, T_3) \simeq \mathbb{P}^4$. 

Let $f \in H^0(\mathcal{I}_{C_p}(2))$ be a homogeneous polynomial of degree two vanishing on $C_p$ and $g = \pi_1^*f \in H^0(\mathcal{I}_C(2))$ be its pullback. Then $g$ vanishes on the lines $\overline{PR_1}$, $\overline{PR_2}$, $\overline{PR_3}$. Since $S_1$ has degree 1 and $V(g)$ is a hypersurface of degree $2$ by Bézout's theorem their intersection is either the whole $H$ or a curve of degree 2. But the intersection contains three lines so it cannot be a curve of degree two, hence $S_1 \subset V(g)$. Thus, the line $\pi(S_1)$ is contained in $V(f)$. Similarly, the line through $\pi(T_1), \pi(T_2)$ and $\pi(T_3)$ is contained in $V(f)$. 

Let $q_1, q_2$ and $q_3$ be a basis of $H^0(\mathcal{I}_{C_p}(2))$ and $Q_i = V(q_i)$. Our aim is to show that $Q_1 \cap Q_2 \cap Q_3$ is a complete intersection, i.e., it is a curve. But firstly, we need an additional result. Let us start with the definition.

\begin{definition}[\cite{CC}]

Let $X\subset \mathbb{P}^r$ be a projective variety. An abstract $k$-th secant variety $\Sec_{X}^k$ of $X$, $\Sec_{X}^k \subseteq \Symm^{k}(X)\times \mathbb{P}^r$ is the Zariski closure of the set of all pairs $([P_0,\ldots,P_{k -1 }],x)$ such that $P_0, \ldots, P_{k - 1} \in X$ are linearly independent, smooth points and $x \in \spann(P_0,\ldots, P_{k - 1})$. 

One has the surjective map $p_X^k: \Sec_X^k \to \Sec^k(X) \subseteq \mathbb{P}^r$, that is, the projection to the second factor, which we call the $k$-secant map of $X$.  

\end{definition}

\begin{lemma}
\label{lemmaa}
There exist 4 points $A_1, A_2, A_3, A_4 \in C_6$ different from $R_i, T_i$ defined above, such that $\spann(A_1, A_2, A_3, A_4)$ contains $P$.

\end{lemma}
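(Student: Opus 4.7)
The plan is a dimension count on subvarieties of $\Symm^4(C_6)$. Define
$$\Sigma := \{[A_1,A_2,A_3,A_4] \in \Symm^4(C_6) : P \in \spann(A_1,A_2,A_3,A_4)\},$$
and, for each $N \in \{R_1,R_2,R_3,T_1,T_2,T_3\}$, let $\Sigma_N \subset \Sigma$ denote the subvariety of tuples containing $N$. The plan is to show $\dim \Sigma = 2$ and $\dim \Sigma_N \leq 1$ for every $N$; then any tuple in the non-empty complement $\Sigma \setminus \bigcup_N \Sigma_N$ provides the four required points.

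For $\dim \Sigma = 2$: by Lemma \ref{lemka} any four distinct points of $C_6$ span a $3$-plane, so the abstract $4$-secant variety $\Sec_{C_6}^4$ has dimension $4 + 3 = 7$. Since $\Sec^3(C_6) = \mathbb{P}^5$, the $4$-secant map $\Sec_{C_6}^4 \to \mathbb{P}^5$ is surjective and hence has generic fibre of dimension $2$; for our general $P$ this fibre is exactly $\Sigma$.

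For $\dim \Sigma_N \leq 1$: I would project $C_6$ from the line $L := \overline{PN}$ to obtain a curve $\tilde{C} := \pi_L(C_6) \subset \mathbb{P}^3$ of degree $5$. The condition $P \in \spann(N, A_2, A_3, A_4)$ is equivalent to $L \subset \spann(N, A_2, A_3, A_4)$, which in turn is equivalent to the collinearity of $\pi_L(A_2), \pi_L(A_3), \pi_L(A_4)$ on $\tilde{C}$. Hence $\Sigma_N$ is in bijection with the family of collinear triples on $\tilde{C}$. By the classical trisecant formula of Cayley for an elliptic quintic in $\mathbb{P}^3$, the trisecant scroll has degree $\binom{4}{3} - 1 \cdot (5 - 3) = 2$, so the family of trisecant lines has dimension one; thus $\dim \Sigma_N \leq 1$.

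The main technical obstacle is that $\tilde{C}$ may be nodal: for $N = R_1$, the inclusion $L \subset S_1$ forces $\pi_L(R_2) = \pi_L(R_3)$, creating a node on $\tilde{C}$, and Cayley's formula in its smooth form does not apply directly. One must check separately that lines through this node contribute only a $1$-parameter subfamily of collinear triples. This reduces to showing that the $3$-planes through $L$ containing at least four points of $C_6$ form a $1$-parameter family, which follows from Lemma \ref{lemka}: five distinct points of $C_6$ are linearly independent, so any $3$-plane through $L$ contains $\{R_1, R_2, R_3\}$ together with at most one additional point of $C_6$. Once this is established, $\dim \bigcup_N \Sigma_N \leq 1 < 2 = \dim \Sigma$ and the lemma follows.
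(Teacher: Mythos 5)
Your overall strategy matches the paper's: both arguments bound the fibre of the $4$-secant map over $P$ from below by $2$ and the loci of $4$-tuples containing a fixed $R_i$ or $T_i$ from above by $1$, then conclude by comparing dimensions. The difference is in how the second bound is obtained: you project from the line $L=\overline{PN}$ and count trisecants of the resulting quintic $\tilde C\subset\mathbb{P}^3$, while the paper projects from $N=R_1$ and then from a point of the image curve, reducing to secants of an elliptic quartic in $\mathbb{P}^3$ through a fixed point. Your reduction is clean and the translation (collinearity of $\pi_L(A_2),\pi_L(A_3),\pi_L(A_4)$) is correct.

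The gap is in the step that does all the work. Cayley's formula $\binom{d-1}{3}-g(d-3)$ is an enumerative count valid when the trisecant family already has its expected dimension $1$ (and for a smooth curve; $\tilde C$ is nodal of arithmetic genus $2$, as you note). It cannot, by itself, exclude the degenerate possibility that the trisecant family is $2$-dimensional, i.e.\ that every secant of $\tilde C$ is a trisecant — so using the formula to conclude that the family is $1$-dimensional is circular. This degenerate alternative is precisely the hard case in the paper's proof (the case $\imm\varphi=\{P'\}$, ruled out there via the four singular quadrics through an elliptic quartic and the genericity of $P$), so it cannot be waved away. What you actually need is the trisecant lemma for the irreducible nondegenerate quintic $\tilde C$, which is true and can be proved directly: if the trisecant family were $2$-dimensional, projection from a general point of $\tilde C$ would be generically $2\!:\!1$ onto a plane curve of degree $2$, so $\tilde C$ would lie on a quadric cone with vertex at each general point of itself; two independent quadrics through $\tilde C$ would then cut out a curve of degree at most $4$, contradicting $\deg\tilde C=5$. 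Separately, your treatment of lines through the node is essentially right but misstated: it is not true that \emph{any} $3$-plane through $L$ contains $\{R_1,R_2,R_3\}$ — only those containing $S_1=\spann(L,R_2)$, i.e.\ exactly those whose image under $\pi_L$ is a line through the node, do so, and for these Lemma \ref{lemka} indeed leaves room for at most one further point of $C_6$, giving a $1$-parameter family. With the trisecant lemma supplied and that sentence corrected, your argument closes, and it has the modest advantage of not needing the paper's second genericity hypothesis on $P$.
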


\begin{proof}

Note that the general fibre of the $4$-secant map $p^4_{C_6}$ has dimension 2. Let us show that the set $$M_{R_1} = \{([R_1,P_0, P_1, P_2], P)\} \subset \Sec_{C_6}^4,$$ where $P_0, P_1, P_2$ are arbitrary points of $C_6$ such that $P \in \spann(R_1, P_0, P_1, P_2)$, has dimension 1. It is enough to show that for a generic point $A \in C_6$ there exist at most finitely many pairs $(B_1^i, B_2^i) \in C_6 \times C_6$ such that $P \in \spann(R_1, A, B_1^i, B_2^i)$. In fact, we can show this for any $A \in C_6 \setminus \{R_1, R_2, R_3\}$. So let $A$ be such point and $\{(B_1^i,B_2^i)\}_{i \in I}$ be a set of all pairs of points of $C_6$ such that $P \in \spann(R_1, A, B_1^i, B_2^i)$ for all $i \in I$. Let $C_6' \subset \mathbb{P}^3$ be the image of $C_6$ by the composition $\pi_{\pi_{R_1}(A)} \circ \pi_{R_1}$ of projections from $R_1$ and then from $\pi_{R_1}(A)$. Also, let $E_j^i = \pi_{\pi_{R_1}(A)} \circ \pi_{R_1} (B_j^i)$, $R_2' = \pi_{\pi_{R_1}(A)} \circ \pi_{R_1} (R_2)$ and $R_3' = \pi_{\pi_{R_1}(A)} \circ \pi_{R_1} (R_3)$. Then $P' = \pi_{\pi_{R_1}(A)} \circ \pi_{R_1} (P) \in \overline{E_1^iE_2^i}$ for all $i \in I$. Let $E \in C_6'$ be any point other than $R_2', R_3'$. Then $R_2', R_3'$ and $E$ span a projective plane which intersects $C_6'$ at some point $E'$ (since $C_6'$ has degree 4). Since the lines $\overline{EE'}$ and $\overline{R_2'R_3'}$ span a projective plane and not $\mathbb{P}^3$, they intersect at some point $J$. Thus, we have a function $\varphi$ from an open subset of $C_6'$ to $\overline{R_2'R_3'} \simeq \mathbb{P}^1$ such that $\varphi(E) = J$. Note that this function is a rational map. Thus, the image of $\varphi$ is either a point, so $\{P'\}$, or an open subset of $\mathbb{P}^1$. 

If $\imm \varphi = \{P'\}$ then the restriction of projection map from $P'$, $\pi_{P'|C_6'}: C_6' \to \pi_{P'}(C_6')$ is 2 to 1 so $\pi_{P'}(C_6')$ is a curve of degree 2 in $\mathbb{P}^2$ hence it is a conic of rank 3 (it can be written as $XY - Z^2$ after some change of coordinates). Thus, $P'$ is a vertex of a quadric surface of rank 3 in $\mathbb{P}^3$. From \cite[page 28]{Hul} we know that there are four singular quadric surfaces containing $C_6'$ (since $C_6'$ is a normal elliptic curve of degree 4 in $\mathbb{P}^3)$. Thus, $P'$ is one of four vertices of these quadric cones. The set of points $$K = \bigcup_{A \in C_6}\pi_{\pi_{R_1}(A)}^{-1}(P')$$ for a fixed $R_1$ is a constructible set of dimension 2 since for any $A$ the set $\pi_{\pi_{R_1}(A)}^{-1}(P')$ is a line, hence a one-dimensional set, and we choose $A$ from $C_6$, which is one-dimensional as well. Analogously, the set of points $$K' = \bigcup_{R_1 \in C_6}\pi^{-1}_{R_1}(K)$$ is a 4-dimensional constructible set. But we have chosen $P$ to be a general point which, in particular, does not belong to $K'$. Thus, $\imm \varphi$ is not a point. 

On the other hand, if $\imm \varphi$ is an open subset of $\mathbb{P}^1$ then, since both $C_6'$ and $\mathbb{P}^1$ are one-dimensional, the fibers of $\varphi$, and, in particular, the fiber over $P'$, are finite. Therefore, $I$ is a finite set, hence the set $M_{R_1}$ is indeed one-dimensional. 

The same result holds for points $R_2, R_3, T_1, T_2, T_3$ and, since the fibre over $P$ is at least two-dimensional, we can conclude, that there exist some $A_1, A_2, A_3, A_4$ different from $R_i$ and $T_i$ and such that $([A_1, A_2, A_3, A_4], P) \subset \Sec_{C_6}^4$. 

\end{proof}

\begin{proposition}
\label{p1}
The intersection of quadric hypersurfaces $Q_1$, $Q_2$ and $Q_3$ is a curve.

\end{proposition}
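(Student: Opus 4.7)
Since $C_p \subset Q_1 \cap Q_2 \cap Q_3$, the intersection has dimension at least $1$. My plan is to show that no higher-dimensional component exists, by establishing that $q_1, q_2, q_3$ behaves essentially like a regular sequence in $\mathbb{C}[x_0, \ldots, x_4]$.

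First I would show that every nonzero $q \in H^0(\mathcal{I}_{C_p}(2))$ has rank at least $4$ in $\mathbb{P}^4$, and in particular is irreducible as a polynomial. A quadric of rank $\leq 2$ has zero locus contained in the union of two hyperplanes; since $C_p$ is non-degenerate and irreducible, this is impossible. A quadric of rank $3$ in $\mathbb{P}^4$ pulls back via $\pi_1^*$ to a rank-$3$ quadric in $H^0(\mathcal{I}_{C_6}(2))$ whose vertex $2$-plane in $\mathbb{P}^5$ necessarily contains $P$, which is forbidden by hypothesis (2) on $P$.

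With irreducibility in hand, any two of $q_1, q_2, q_3$ are coprime polynomials and thus form a regular sequence. The scheme $Y := V(q_1, q_2)$ is a Cohen--Macaulay complete intersection of pure dimension $2$ and degree $4$, and the Koszul resolution gives $H^0(\mathcal{I}_Y(2)) = \langle q_1, q_2 \rangle$. By linear independence $q_3 \notin \langle q_1, q_2 \rangle$, so $V(q_3)$ does not scheme-theoretically contain $Y$.

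The main obstacle is that the previous non-containment still allows $V(q_3)$ to contain a $2$-dimensional irreducible component of $Y^{\mathrm{red}}$ when $Y$ is reducible or non-reduced, which would make $\dim(V(q_3) \cap Y) = 2$. Here Lemma \ref{lemmaa} enters: it supplies a $2$-plane $\Pi = \pi(\mathrm{span}(A_1, A_2, A_3, A_4))$ meeting $C_p$ in four distinguished points $B_i = \pi(A_i)$. The three restrictions $q_i|_\Pi$ are conics through these four points, hence lie in a pencil of vector dimension at most $2$. A hypothetical $2$-dimensional component $S \subset V(q_1) \cap V(q_2) \cap V(q_3)$ would intersect $\Pi$ in a zero-dimensional scheme contained in the base locus $\{B_1, B_2, B_3, B_4\}$ of that pencil. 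As the $4$-secant map over $P$ has two-dimensional fibre, varying the quadruple $(A_1, A_2, A_3, A_4)$ produces a two-parameter family of such $\Pi$, forcing $S \cap (\bigcup \Pi) \subset C_p$; tracing through the geometry this contradicts $\dim S = 2 > \dim C_p = 1$ and completes the proof.
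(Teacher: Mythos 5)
Your opening moves are sound: ruling out low rank for elements of $H^0(\mathcal{I}_{C_p}(2))$ via condition (2) on $P$, deducing that $(q_1,q_2)$ is a regular sequence, and observing via the Koszul resolution that $q_3\notin\langle q_1,q_2\rangle$ are all correct, and you rightly identify that none of this excludes $V(q_3)$ containing a reduced $2$-dimensional component $S$ of $V(q_1)\cap V(q_2)$. You also reach for the right tool (Lemma \ref{lemmaa} and the two-dimensional family of $4$-secant $3$-planes through $P$), which is exactly the ingredient the paper uses. But the last paragraph, which is where the entire difficulty lives, does not close. First, the claim that $S\cap\Pi$ is zero-dimensional and contained in $\{B_1,\dots,B_4\}$ needs the restrictions $q_1|_\Pi,q_2|_\Pi,q_3|_\Pi$ to span a two-dimensional space of conics whose base locus is exactly the four points; you have not excluded that the three restrictions are mutually proportional (common zero locus a conic curve), that some $q_i$ vanishes identically on $\Pi$ (a rank-$4$ quadric cone in $\mathbb{P}^5$ does contain $3$-planes through its vertex), or that three of the $B_i$ are collinear so that the pencil acquires a fixed line. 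Second, and more seriously, even granting $S\cap\Pi\subseteq\{B_1(\Pi),\dots,B_4(\Pi)\}\subset C_p$ for all $\Pi$ in the family, the concluding dimension count yields no contradiction: the incidence variety $\{(\Pi,x):x\in S\cap\Pi\}$ has dimension $2$ (finite nonempty fibres over the $2$-parameter family of planes), its image in $S$ lies in $C_p$, and its fibre over a point of $C_p$ is $1$-dimensional --- which is precisely what the $1$-dimensionality of $M_{A}$ in Lemma \ref{lemmaa} predicts, so the picture is internally consistent. The most you can extract is that $S\cap C_p$ is infinite, hence $C_p\subseteq S$; ruling out a surface $S\subseteq Q_1\cap Q_2\cap Q_3$ containing $C_p$ is then the real problem, and it remains untouched.

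For comparison, the paper attacks exactly this residual problem and it takes most of the section: working upstairs with the cones $V(\pi_1^*q_i)\subset\mathbb{P}^5$, it notes that a $2$-dimensional excess component forces $S=\bigcap_i V(\pi_1^*q_i)$ to be a threefold of degree $3$, so $H\cap S$ (with $H=\mathrm{span}(R_i,T_j)$) is either three planes $S_1\cup S_2\cup S_3$ or two planes met non-transversally along one of them. The four points of Lemma \ref{lemmaa} are then used not to cut with a plane $\Pi$, but to manufacture $18$ auxiliary hyperplanes and six rank-$4$ quadrics $f_j=h_0h_3^j-h_1^jh_2^j$ through $C_6$; a combinatorial analysis of which hyperplanes can contain $S_3$ kills the first case, and an explicit tangent-space computation in the coordinates $h_0,h_1^1,h_1^2,h_2^1,h_2^2$ kills the second. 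So your proposal is not a correct alternative proof as it stands: it shares the key lemma but replaces the paper's two case analyses with a dimension count that, on inspection, is compatible with the configuration it is meant to exclude.
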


\begin{proof}

Let $V_p \subset V^*$ be the space of linear forms vanishing at $P$.
By Lemma \ref{lemmaa} we can choose 4 points $A_1, A_2, A_3, A_4$ on $C_6$ such that each $A_i$ does not belong to $\{R_1, R_2, R_3, T_1, T_2, T_3\}$ and $P \in \spann(A_1, A_2, A_3, A_4)$. Let $h_0 \in V_p$ be such that $H = V(h_0)$. Let $H_1^1$ be a hyperplane spanned by $R_1, R_2, R_3$, $A_1$ and $A_2$. Then, since $C_6$ has degree 6, there exists $P_1^1 \in C_6$ such that $R_1 + R_2 + R_3 + A_1 + A_2 + P_1^1 \sim \mathcal{O}_{C_6}(H)$. Using the same argument, we can define 12 hyperplanes: 

\begin{enumerate}
    \item $H_1^1 = \spann( R_1, R_2, R_3, A_1, A_2, P_1^1)$; \:\:\:\:\:\:\:\:\:\:\:\:\:\: $H_2^1 = \spann( T_1, T_2, T_3, A_3, A_4, P_2^1)$.
    \item $H_1^2 = \spann( R_1, R_2, R_3, A_1, A_3, P_1^2)$; \:\:\:\:\:\:\:\:\:\:\:\:\:\: $H_2^2 = \spann( T_1, T_2, T_3, A_2, A_4, P_2^2)$.
    \item $H_1^3 = \spann( R_1, R_2, R_3, A_1, A_4, P_1^3)$; \:\:\:\:\:\:\:\:\:\:\:\:\:\: $H_2^3 = \spann( T_1, T_2, T_3, A_2, A_3, P_2^3)$.
    \item $H_1^4 = \spann( R_1, R_2, R_3, A_2, A_3, P_1^4)$; \:\:\:\:\:\:\:\:\:\:\:\:\:\: $H_2^4 = \spann( T_1, T_2, T_3, A_1, A_4, P_2^4)$.
    \item $H_1^5 = \spann( R_1, R_2, R_3, A_2, A_4, P_1^5)$; \:\:\:\:\:\:\:\:\:\:\:\:\:\: $H_2^5 = \spann( T_1, T_2, T_3, A_1, A_3, P_2^5)$.
    \item $H_1^6 = \spann( R_1, R_2, R_3, A_3, A_4, P_1^6)$; \:\:\:\:\:\:\:\:\:\:\:\:\:\: $H_2^6 = \spann( T_1, T_2, T_3, A_1, A_2, P_2^6)$.
\end{enumerate}

Now, let us show that all 12 hyperplanes are different. Indeed, the hyperplanes from the first column are clearly different from the ones in the second by Bézout's theorem. Without loss of generality suppose that $H_1^1 = H_1^m$ for some $m$. Then $$R_1 + R_2 + R_3 + A_1 + A_2 + A_j \sim \mathcal{O}_{C_6}(H)$$ for $j \in \{3,4\}$. But then planes $S_1$ and $\spann(A_1, A_2, A_3)$ intersect at some point $Q \neq P$. Thus, the line $\overline{PQ}$ is contained in $S_1 \cap \spann(A_1, A_2, A_3, A_4)$ hence $\{R_1, R_2, R_3, A_1, A_2, A_3, A_4\}$ lie both on $C_6$ and a hyperplane, which contradicts Bézout's theorem.    

Note that we have 
\begin{equation*}
\begin{split}
A_1 + A_2 + A_3 + A_4 + P_1^1 + P_2^1 &\sim (R_1 + R_2 + R_3 + A_1 + A_2 + P_1^1) \\ 
&+ (T_1 + T_2 + T_3 + A_3 + A_4 + P_2^1) \\
& - (R_1 + R_2 + R_3 + T_1 + T_2 + T_3) \sim  \mathcal{O}_{C_6}(H)    
\end{split}
\end{equation*}
so there is a hyperplane $H_3^1$ intersecting $C_6$ at $A_1, A_2, A_3, A_4, P_1^1$ and $P_2^1$. Analogously, there is a hyperplane $H_3^k$ intersecting $C_6$ in $A_1, A_2, A_3, A_4, P_1^k, P_2^k$ for $2 \leq k \leq 6$. Let $h_i^j \in V_p$ be a linear form such that $H_i^j = V(h_i^j)$. Then $f_j = h_0h_3^j - h_1^jh_2^j \in \Symm^2V_p$ and $V(f_j) \cap C_6$ contains $Z_j = \{R_1, R_2, R_3, T_1, T_2, T_3, A_1, A_2, A_3, A_4, P_1^j, P_2^j\}$ and we can multiply $h_3^j$ by some constant so that $V(f_j)$ contains one more point of $C_6$ not from $Z_j$. It follows from Bézout's theorem that $C_6 \subset V(f_j)$ for $1 \leq j \leq 6$.

Note that $N = Q_1 \cap Q_2$ has dimension 2 since $Q_1 \neq Q_2$ and $C_p$ is not contained in any hyperplane. Moreover, $N$ has degree 4 by Bézout's theorem. If $\dim(N \cap Q_3) = 2$ then its degree is less than 4 so it is 3 (\cite[Proposition 0]{EH}) - suppose this is the case. Let $\pi_1^*$ be the pull-back map from $H^0(\mathcal{I}_{C_p}(2))$ to $H^0(\mathcal{I}_{C_6}(2))$ induced from $\pi$. Then $$S = V(\pi_1^*q_1) \cap V(\pi_1^*q_2) \cap V(\pi_1^*q_3) \subset \mathbb{P}^5$$ is a projective variety of dimension 3 and degree 3. 
We know that $H \cap S$ contains $S_1$ and $S_2$ so, by Bézout's theorem, $H \cap S$ is either a union of three planes $S_1, S_2$ and $S_3$ or two planes, but the intersection is not transversal along one of them. 

Let us firstly consider the former case. It is enough to show that $S_3$ cannot be a subset of $V(q')$ for some $q' \in \Symm^2 V_p \cap H^0(\mathcal{I}_{C_6}(2)) = \spann(\pi_1^*q_1, \pi_1^*q_2, \pi_1^*q_3)$.  Since $S_3 \subset H$ and $S_3 \subset V(f_j)$ for each $j$ by our assumption, $S_3$ is contained in $V(h_1^jh_2^j)$. But $h_1^j$ and
$h_2^j$ are linear forms and $S_3$ is a linear space so $S_3$ is contained in one of $H_1^j$ and $H_2^j$ for each $j$. 

Firstly, let us show that $S_3$ cannot be a subset of more than 3 of $H_1^j$ (the same for $H_2^j$). Suppose that $S_3$ is a subset of 4 out of $H_1^j$'s. There are two possible cases, namely when the remaining two of $H_1^j$ share one of $A_i$'s and when they do not. Without loss of generality, we can assume that those four $H^1_j$'s are 

\begin{enumerate}
    \item $H_1^1$, $H_1^2$, $H_1^3$, $H_1^4$: since $S_1 \neq S_3$ we have $\dim \spann(S_1, S_3) \geq 3$. But since $S_1 \cup S_3 \subset H_1^j$ for $1 \leq j \leq 4$, we get $\spann(R_1, R_2, R_3, A_1) = H_1^1 \cap H_1^3 = \spann(S_1, S_3) = H_1^2 \cap H_1^4 = \spann(R_1, R_2, R_3, A_3)$ which contradicts lemma \ref{lemka} (5 points of $C_6$ on a linear space of dimension 3). 
    \item $H_1^1$, $H_1^2$, $H_1^5$, $H_1^6$: we can consider $H_1^1 \cap H_1^2$ and $H_1^5 \cap H_1^6$ and repeat the reasoning of item 1. 
\end{enumerate}

Now we know that $S_3$ is contained in exactly $3$ hyperplanes $H_1^j$ and 3 hyperplanes $H_2^j$. Moreover, we know that if $S_3 \subset H_1^j$ then $S_3 \nsubseteq H_2^j$ and vice versa. Note that among those three hyperplanes $H_1^j$'s we can always find two sharing the same $A_i$. Without loss of generality we can assume that $S_3 \subset H_1^1 \cap H_1^2$. We have two cases:

\begin{enumerate}
    \item $S_3 \subset H_2^5 \cap H_2^6$: we have $$S_1 \cup S_3 \subset H_1^1 \cap H_1^2 = \spann(R_1, R_2, R_3, A_1)$$ and $$S_2 \cup S_3 \subset H_2^5 \cap H_2^6 = \spann(T_1, T_2, T_3, A_1).$$ But then we get $$\dim(\spann(S_1,S_3)) = \dim(\spann(S_2,S_3)) = 3 = \dim(H_1^1 \cap H_1^2) = \dim(H_2^5 \cap H_2^6),$$ hence $$\spann(S_1,S_3) = H_1^1 \cap H_1^2 $$ and, similarly,  $$\spann(S_2,S_3) = H_2^5 \cap H_2^6.$$ Therefore, $$\{R_1, R_2, R_3, T_1, T_2, T_3, A_1\} \subset \spann(S_1, S_2, S_3) = H,$$ which is a contradiction. 
    \item $S_3 \nsubseteq H_2^5 \cap H_2^6$: without loss of generality let $S_3 \nsubseteq H_2^6$. This implies $S_3 \subset H_1^6$. But then we can consider $H_1^2$ and $H_1^6$ - we have $S_3 \subset H_2^5 \cap H_2^1$ and the remaining part is analogous to the previous case. 
\end{enumerate}

Thus, in all cases $S_3$ cannot be a subset of all $V(f_i)$'s. It follows that $S_3 \nsubseteq V(\pi_1^*q_1) \cap V(\pi_1^*q_2) \cap V(\pi_1^*q_3)$, which is a contradiction

Now, we need to consider the case when $H \cap S = S_1 \cup S_2$ and, without loss of generality, the intersection is not transversal along $S_1$. Note that $$H \cap H_1^1 \cap H_1^2 \cap H_2^1 \cap H_2^2 = (H \cap H_1^1 \cap H_1^2) \cap (H \cap H_2^1 \cap H_2^2) = S_1 \cap S_2 = \{P\}$$ hence $h_0, h_1^1, h_1^2, h_2^1, h_2^2$ is a basis of $V_p$. Note that $h_1^1, h_1^2, h_1^3$ are linearly dependent in $V_p$ since they intersect in a linear space of dimension 3 hence $h_1^3 = a_1 h_1^1 + a_2 h_1^2$ (and, analogously, $h_2^3 = b_1 h_2^1 + b_2 h_2^2$). We can multiply $h_1^1, h_1^2, h_2^1$ and $h_2^2$ by some constants so that $h_1^3 = h_1^1 + h_1^2$ and $h_2^3 = h_2^1 + h_2^2$. We can also multiply $h_3^1$ and $h_3^2$ by certain constants so that $f_1 = h_0h_3^1 - h_1^1h_2^1$ and $f_2 = h_0h_3^2 - h_1^2h_2^2$ contain $C_6$ in their zero loci. Let $f_3 = h_0h_3^3 - h_1^3h_2^3 = h_0h_3^3 - (h_1^1 + h_1^2)(h_2^1 + h_2^2)$. Note that $f_1, f_2, f_3$ is a basis of $\Symm^2 V_p \cap H^0(\mathcal{I}_{C_6}(2))$ since they are linearly independent in $\Symm^2 V_p$. This gives us a possibility to work with $f_i$ instead of $\pi_1^*(q_i)$.

Since $H$ and $S$ don't intersect transversally along $S_1$, for any $x \in S_1$ we have $\bigcap_{i = 1}^{3}T_xV(f_i) = T_xS \subset T_xH$. Fix some $g \in H^0(\mathcal{O}_{\mathbb{P}^5}(1))$ such that $g(p) \neq 0$. Then $\frac{h_0}{g}, \frac{h_1^1}{g}, \frac{h_1^2}{g}, \frac{h_2^1}{g}, \frac{h_2^2}{g}$ is a system of coordinates on the affine set $\mathbb{P}^5 \setminus V(g)$. Since we deal with cones with vertex $p$, i.e., none of the equations concerning us involves $g$, we can by a slight abuse of notation denote the coordinates on $\mathbb{P}^5 \setminus V(g)$ by $h_0, h_1^1, h_1^2, h_2^1, h_2^2$. Furthermore, we may identify $T_x(\mathbb{P}^5 \setminus V(g))$ with $\mathbb{P}^5 \setminus V(g)$ (since $\mathbb{P}^5 \setminus V(g) \simeq \mathbb{A}^5$). Also, $T_xH = V(h_0)$ for every $x \in H \setminus V(g)$.

Therefore, for any $x \in S_1 \setminus V(g)$ we have $h_0 = L_H \in \spann(L_{xV(f_1)}, L_{xV(f_2)}, L_{xV(f_3)})$, where $L_{xN}$ is a linear form defining the tangent space of $N$ at $x$. Let us compute $$L_{xV(f_i)} = \frac{\partial f_i}{\partial h_0}(x)\cdot h_0 + \frac{\partial f_i}{\partial h_1^1}(x)\cdot h_1^1 + \frac{\partial f_i}{\partial h_1^2}(x)\cdot h_1^2 + \frac{\partial f_i}{\partial h_2^1}(x)\cdot h_2^1 + \frac{\partial f_i}{\partial h_2^2}(x)\cdot h_2^2$$ for $x \in S_1 \setminus V(g)$.

\begin{enumerate}
    \item We have
    $$L_{xV(f_1)} = h_3^1(x)\cdot h_0 - h_2^1(x)\cdot h_1^1 - h_1^1(x)\cdot h_2^1 + h_0(x)\cdot \nabla h_3^1.$$ Now we can use the fact that for any $x \in S_1$ we have $h_0(x) = h_1^1(x) = h_1^2(x) = 0$ to get $$L_{xV(f_1)} = h_3^1(x)\cdot h_0 - h_2^1(x)\cdot h_1^1$$ or, in other words, 
    $$L_{xV(f_1)} = (h_3^1(x),-h_2^1(x),0,0,0).$$
    \item Analogously, $$L_{xV(f_2)} = (h_3^2(x),0,-h_2^2(x),0,0).$$
    \item Analogously, $$L_{xV(f_3)} = (h_3^3(x),-(h_2^1(x) + h_2^2(x)),-(h_2^1(x) + h_2^2(x)),0,0).$$
\end{enumerate} 

Let us write $h_3^i = \alpha_ih_2^1 + \beta_ih_2^2 + \mbox{other terms}$. Then we have $h_3^i(x) = \alpha_ih_2^1(x) + \beta_ih_2^2(x)$ for $x \in S_1$.

Note that $h_0 = (1,0,0,0,0) \in \spann(L_{xV(f_1)}, L_{xV(f_2)}, L_{xV(f_3)})$ for any $x \in S_1$ and for any $a, b \in \mathbb{C}$ we can find $x \in S_1$ such that $h_2^1(x) = a$ and $h_2^2(x) = b$. Let us take $x', x'' \in S_1$ such that $h_2^1(x') = h_2^2(x'') = 0$ and $h_2^2(x') = h_2^1(x'') = 1$. Then for some $D_1, D_2, D_3 \in \mathbb{C}$
\begin{equation*}
\begin{split}
   (1,0,0,0,0) &= D_1\cdot(h_3^1(x'),-h_2^1(x'),0,0,0) + D_2\cdot(h_3^2(x'),0,-h_2^2(x'),0,0) \\  
   & + D_3\cdot(h_3^3(x'),-(h_2^1(x') + h_2^2(x')),-(h_2^1(x') + h_2^2(x')),0,0)\\
   & = (D_1h_3^1(x') + D_2h_3^2(x') + D_3h_3^3(x'),-D_3,-D_2-D_3,0,0)
\end{split}
\end{equation*}
so $D_2 = D_3 = 0$ hence  $h_3^1(x') \neq 0 \Longrightarrow \beta_1 \neq 0$. Analogously, using $x''$, we obtain $\alpha_2 \neq 0$. Now consider $x \in S_1$ such that $h_2^1(x) = 1$ and $h_2^2(x) \neq 0$. Then for some $A,B,C \in \mathbb{C}$ $$(1,0,0,0,0) = (Ah_3^1(x) + Bh_3^2(x) + Ch_3^3(x), -A - C - Ch_2^2(x) ,-Bh_2^2(x) - C - Ch_2^2(x),0,0).$$ Hence $C \neq 0$ so we can redefine $A':= \frac{A}{C}$ and $B':=\frac{B}{C}$ and we have : $$\left(\frac{1}{C},0,0,0,0\right) = (A'h_3^1(x) + B'h_3^2(x) + h_3^3(x), -A' - 1 - h_2^2(x) , -B'h_2^2(x) - 1 - h_2^2(x),0,0).$$ Therefore, $A' = - 1 - t$ and $B' = -\frac{1 + t}{t}$, where $t = h_2^2(x)$. Also, we have $A'h_3^1(x) + B'h_3^2(x) + h_3^3(x) \neq 0$. Therefore $$A'h_3^1(x) + B'h_3^2(x) + h_3^3(x) = (-1 - t)(\alpha_1 + \beta_1t) - \frac{1 + t}{t}\cdot\left(\alpha_2 + \beta_2t\right) + \alpha_3 + \beta_3t = r(t)$$ has to be non-zero. But $$-t\cdot r(t) = \beta_1t^3 + (\beta_1 + \alpha_1 + \beta_2 - \beta_3)t^2 + (\alpha_1 + \beta_2 + \alpha_2 - \alpha_3)t + \alpha_2 $$ with $\beta_1 \neq 0 \neq \alpha_2$ has $3$ non-zero solutions $s_i$. Choosing $x_{s} \in S_1$ such that $h_2^1(x_s) = 1$ and $h_2(x_s) = s_1$ gives a contradiction with $h_0 \in \spann(L_{x_sV(f_1)}, L_{x_sV(f_2)}, L_{x_sV(f_3)})$. 

Thus,  $Q_1 \cap Q_2 \cap Q_3$ is a curve.  

\end{proof}

We already know that the intersection contains $C_p$ and two lines. By Bézout's theorem, $\deg(\bigcap_{i = 1}^3Q_i) = 2^3 = 8$ so $\bigcap_{i = 1}^3Q_i$ is a union of $C_p$ and two lines. We are ready to prove the main results of the paper.

\begin{theorem}

The ideal $I(C_p)$ of the curve $C_p$ is generated by three polynomials of degree $2$ and two polynomials of degree 3. 

\end{theorem}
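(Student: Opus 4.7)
The plan is to bootstrap directly from the results already established. First I would record that, by Proposition \ref{k}, the ideal $I(C_p)$ is generated in degrees $\leq 3$, and by Proposition \ref{prop3} we have $h^0(\mathcal{I}_{C_p}(2)) = 3$ and $h^0(\mathcal{I}_{C_p}(3)) = 17$. Hence any minimal generating set of $I(C_p)$ must consist of a basis $q_1,q_2,q_3$ of $H^0(\mathcal{I}_{C_p}(2))$ together with enough cubics to surject onto the cokernel of the multiplication map
$$\mu \colon H^0(\mathcal{I}_{C_p}(2)) \otimes H^0(\mathcal{O}_{\mathbb{P}^4}(1)) \to H^0(\mathcal{I}_{C_p}(3)).$$
The problem therefore reduces to computing the rank of $\mu$, equivalently to ruling out nontrivial linear syzygies $L_1 q_1 + L_2 q_2 + L_3 q_3 = 0$ among the three quadrics.

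This is where Proposition \ref{p1} enters decisively. It says that $Q_1 \cap Q_2 \cap Q_3$ is a curve in $\mathbb{P}^4$, so the three quadrics cut out a subscheme of codimension $3$, the expected codimension. Since $\mathbb{C}[x_0,\ldots,x_4]$ is Cohen--Macaulay, three forms whose common vanishing locus has the expected codimension automatically form a regular sequence. For a regular sequence the first syzygy module is generated by the Koszul relations $q_i e_j - q_j e_i$, whose coefficients are quadratic. In particular there are no syzygies with linear coefficients, so $\mu$ is injective and $\dim \imm(\mu) = 3 \cdot 5 = 15$.

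Combining these two points, the cokernel of $\mu$ has dimension $17 - 15 = 2$, so precisely two cubics beyond the span of $\{L \cdot q_i\}$ are required to generate $H^0(\mathcal{I}_{C_p}(3))$. Together with $q_1,q_2,q_3$, this yields a generating set of $I(C_p)$ consisting of three quadrics and two cubics, which is exactly the assertion of the theorem. I expect no new obstacle to appear in this step: the geometric heart of the argument has already been carried out in Proposition \ref{p1}, and what remains here is a short bookkeeping step using the Koszul resolution of a regular sequence together with the dimension counts from Proposition \ref{prop3}.
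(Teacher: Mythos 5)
Your proposal is correct and follows essentially the same route as the paper: Proposition \ref{p1} gives that $(q_1,q_2,q_3)$ is a regular sequence (the paper, like you, passes from the codimension-3 intersection to regularity and then to the absence of linear syzygies via the Koszul complex), so the span of the $x_iq_j$ has dimension $15$ inside the $17$-dimensional $H^0(\mathcal{I}_{C_p}(3))$, leaving exactly two new cubic generators, and Proposition \ref{k} closes the argument in degree $3$. Your explicit mention of the Cohen--Macaulay property to justify the regular-sequence step only makes precise what the paper leaves implicit; otherwise the two proofs coincide.
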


\begin{proof}

From Proposition $\ref{p1}$ we know that the sequence $(q_1, q_2, q_3)$ is regular. This implies that there are no linear syzygies between $q_1, q_2, q_3$ (e.g. from the fact that the Koszul complex of $(q_1, q_2, q_3)$ is exact). Note that $x_iq_j \in H^0(\mathcal{I}_{C_p}(3))$ is a cubic containing $C_p$. Therefore, as a corollary, we have $$\dim U = 3 \cdot 5 = 15$$ where $U \subset H^0(\mathcal{I}_{C_p}(3))$ is a vector space generated by all possible $x_iq_j$. Thus, by Proposition \ref{prop3}, there are two linearly independent cubics $F_1 = V(f_1)$ and $F_2 = V(f_2)$ with $f_1, f_2 \subset H^0(\mathcal{I}_{C_p}(3))$ not in $U$ such that $$\spann(f_1, f_2) \oplus U = H^0(\mathcal{I}_{C_p}(3)).$$ Combining this with the first part of Proposition \ref{k} we get the result.

\end{proof}

\begin{theorem}

The ideal $I(C_{pq})$ of the curve $C_{pq}$ is generated by two polynomials of degree $3$ and three polynomials of degree 4.

\end{theorem}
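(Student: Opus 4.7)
The plan is to follow the template of the preceding theorem, but the argument simplifies substantially because $\dim \mathbb{P}^3 = 3$ and $h^0(\mathcal{I}_{C_{pq}}(3)) = 2$. Let $q_1, q_2$ be a basis of $H^0(\mathcal{I}_{C_{pq}}(3))$; the crux will be to show that $(q_1, q_2)$ is a regular sequence in $\mathbb{C}[x_0,\ldots,x_3]$, so that its Koszul complex is exact and no non-trivial linear syzygy is present.

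First I would observe that $h^0(\mathcal{I}_{C_{pq}}(2)) = 0$ by a pigeonhole argument: a non-zero quadric $q \in H^0(\mathcal{I}_{C_{pq}}(2))$ would yield four cubics $x_0 q, x_1 q, x_2 q, x_3 q$, linearly independent since $\mathbb{C}[x_0,\ldots,x_3]$ is a domain, all lying in $H^0(\mathcal{I}_{C_{pq}}(3))$, contradicting $h^0(\mathcal{I}_{C_{pq}}(3)) = 2$. Next, if $q_1$ and $q_2$ shared a common factor $g$ of degree $d \in \{1, 2\}$, writing $q_i = g \cdot h_i$ and using irreducibility of $C_{pq}$ on the decomposition $V(q_1) \cap V(q_2) = V(g) \cup (V(h_1) \cap V(h_2))$ would produce a non-zero element of $H^0(\mathcal{I}_{C_{pq}}(m))$ with $m \in \{1, 2\}$ --- ruled out by non-degeneracy of $C_{pq}$ in $\mathbb{P}^3$ together with the vanishing just established. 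Hence $(q_1, q_2)$ is a regular sequence.

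Exactness of the corresponding Koszul complex in degree $4$ then yields linear independence of the eight products $x_i q_j$, so the subspace $U \subset H^0(\mathcal{I}_{C_{pq}}(4))$ they span has dimension exactly $8$. Since $h^0(\mathcal{I}_{C_{pq}}(4)) = 11$ by Proposition \ref{prprpr}, one may pick any $3$-dimensional complement $\spann(f_1, f_2, f_3)$ with $U \oplus \spann(f_1, f_2, f_3) = H^0(\mathcal{I}_{C_{pq}}(4))$. By Proposition \ref{k}(2), $I(C_{pq})$ is generated in degrees at most $4$, and minimality of the generating set in each degree follows from the vanishing $h^0(\mathcal{I}_{C_{pq}}(2)) = 0$ together with the dimension count; hence $q_1, q_2, f_1, f_2, f_3$ generate $I(C_{pq})$.

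The main obstacle, establishing the regular-sequence property, is markedly easier here than in the proof of Proposition \ref{p1}: the pigeonhole vanishing of $h^0(\mathcal{I}_{C_{pq}}(2))$ replaces the intricate tangent-space analysis carried out for $C_p$, and no geometric input about $3$-secant planes is required.
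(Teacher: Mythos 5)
Your proposal is correct and follows essentially the same route as the paper: rule out a common factor of $q_1,q_2$ (using non-degeneracy for a linear factor and the pigeonhole trick $x_0q,\ldots,x_3q \in H^0(\mathcal{I}_{C_{pq}}(3))$ for a quadratic one), conclude $(q_1,q_2)$ is a regular sequence with no linear syzygies, count $11 - 8 = 3$ new quartics, and invoke the $4$-regularity statement of Proposition \ref{k}. Your only deviation is cosmetic: you package the pigeonhole argument as the separate claim $h^0(\mathcal{I}_{C_{pq}}(2)) = 0$ and then apply it to both common-factor cases, which makes explicit a point the paper leaves implicit but does not change the argument.
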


\begin{proof}

From Proposition \ref{prprpr} we know that there are two linearly independent cubic polynomials in $I(C_{pq})$, which we will denote by $g_1$ and $g_2$. Let $G_1 = V(g_1)$ and $G_2 = V(g_2)$. We will show that $G_1 \cap G_2$ is one-dimensional. Suppose it is not. Then $g_1$ and $g_2$ have a common factor. By Bézout's theorem, if the factor has degree one, then the intersection is a sum of a plane and a curve of degree 4. If the common factor has degree 2, then the intersection is a sum of surface of degree 2 and a line. The former case is impossible since $C_{pq}$ is non-degenerate of degree 6, $C_{pq} \subset G_1 \cap G_2$ and $C_{pq}$ is not a plane curve. In the latter case, $C_{pq} \subset G_1 \cap G_2 = Q \cup l$,  where $l$ is a line and $Q = V(f)$ for some homogeneous polynomial $f$ of degree two, therefore $C_{pq} \subset Q$. Then $x_0f, x_1f, x_2f, x_3f \in H^0(I(C_{pq}(3))$ are cubics containing $C_{pq}$. They are linearly independent since if for some $\alpha_0,\ldots, \alpha_3$ we have $$\alpha_0x_0f + \alpha_1x_1f + \alpha_2x_2f + \alpha_3x_3f = 0$$ then $$(\alpha_0x_0 + \alpha_1x_1 +  \alpha_2x_2 +  \alpha_3x_3)f = 0$$ implying $\alpha_0 = \alpha_1 = \alpha_2 = \alpha_3 = 0$. But $H^0(I(C_{pq}(3))$ is two-dimensional - contradiction. Thus, $(g_1, g_2)$ is a regular sequence and there are no linear syzygies between $g_1$ and $g_2$.

Now, arguing as before, we see that there are $h^0(\mathcal{I}_{C_{pq}}(4)) - 4\cdot h^0(\mathcal{I}_{C_{pq}}(3)) = 11 - 8 = 3$ linearly independent quartics in $H^0(\mathcal{I}_{C_{pq}}(4))$, which do not come from $g_1$ and $g_2$. Combining this with the second part of Proposition \ref{k} we get the result.   

\end{proof}

\begin{remark} For the sake of completeness, let us study the image of the curve $C_{pq}$ under the generic projection onto $\mathbb{P}^2$. From \cite[main Theorem]{Lan} it follows that $\Sec{C_{pq}} = \mathbb{P}^3$. Thus, from the proof of \cite[Theorem 3.10]{Har} we see that there is an open subset  $U \subset \mathbb{P}^3$ such that the projection from each $O \in U$ is a birational morphism from $C_{pq}$ to its image in $\mathbb{P}^2$, and the image has at most nodes for singularities. Using the genus-degree formula we conclude that in this case the image of $C_{pq}$ has arithmetic genus 10, hence it has exactly 9 nodes.

\end{remark}

\section*{Acknowledgments}

The results presented in this article are a part of the author's Bachelor Thesis at Jagiellonian University in Kraków. I am extremely thankful to my supervisor, Michał Farnik, for the insightful discussions, constant support and valuable guidance throughout my studies. I would like to thank Paweł Borówka for the advice and feedback on the article and Grzegorz Kapustka for suggesting the topic. The author has been supported by the Polish National Science Center project number 2019/35/ST1/0238.

\bibliographystyle{alpha}

\end{document}